\documentclass{amsart}

\usepackage{geometry}
\usepackage{physics}
\usepackage{times}
\usepackage{longtable}

\usepackage{enumerate}
\usepackage[all]{xy} % diagrams

\usepackage{amsmath, amssymb, amsfonts, latexsym, mdwlist, amsthm,amscd}
\usepackage{caption, subcaption}

\usepackage{hologo} % for TeX engine logos
\usepackage{booktabs} % for nice tables
\usepackage{tablefootnote} % for table footnotes
\usepackage{tikz}
\usetikzlibrary{calc,trees,arrows,chains,shapes.geometric,%
	decorations.pathreplacing,decorations.pathmorphing,shapes,%
	matrix,shapes.symbols}

\tikzset{
	>=stealth',
	punktchain/.style={
		rectangle,
		rounded corners,
		draw=black, thick,
		minimum height=3em,
		text centered,
		on chain},
	line/.style={draw, thick, <-},
	element/.style={
		tape,
		top color=white,
		bottom color=blue!50!black!60!,
		minimum width=8em,
		draw=blue!40!black!90, very thick,
		text width=10em,
		minimum height=3.5em,
		text centered,
		on chain},
	every join/.style={->, thick,shorten >=1pt},
	decoration={brace},
	tuborg/.style={decorate},
	tubnode/.style={midway, right=2pt},
}
\usepackage{cite}
\usepackage{url}

\usepackage{verbatim}
\usepackage{mathtools}

\usepackage{pgf,tikz}
\usepackage{tikz-cd}
\usetikzlibrary{calc,matrix,patterns,shadings,backgrounds,fit}
\pgfdeclarelayer{myback}
\pgfsetlayers{myback,background,main}
\usepackage{stmaryrd}
\usepackage{diagbox}
\usepackage{extarrows}

\usepackage[bookmarks, colorlinks, breaklinks, pdftitle={Stab on X 1 Feb},
pdfauthor={Chunyi Li}]{hyperref}
\hypersetup{linkcolor=blue,citecolor=blue,filecolor=black,urlcolor=blue}

\usepackage{paralist}
\setdefaultenum{(a)}{(i)}{}{}

\usepackage{pgfplots}
\pgfplotsset{compat=1.18}
\usepackage{mathrsfs}
\usetikzlibrary{arrows}

\numberwithin{equation}{section}

\def\C{\ensuremath{\mathbb{C}}}

\def\Q{\ensuremath{\mathbb{Q}}}
\def\R{\ensuremath{\mathbb{R}}}
\def\Z{\ensuremath{\mathbb{Z}}}

\def\ch{\mathop{\mathrm{ch}}\nolimits}

\def\Coh{\mathop{\mathrm{Coh}}\nolimits}

\def\Gr{\mathop{\mathrm{Gr}}\nolimits}

\def\Hom{\mathop{\mathrm{Hom}}\nolimits}
\def\id{\mathop{\mathrm{id}}\nolimits}

\def\Kn{\mathop{\mathrm{K}_{\mathrm{num}}}}

\def\min{\mathop{\mathrm{min}}\nolimits}

\def\Pic{\mathop{\mathrm{Pic}}}

\def\rk{\mathop{\mathrm{rk}}}

\def\Stab{\mathop{\mathrm{Stab}}\nolimits}

\def\blank{\underline{\hphantom{A}}}

\def\Db{\mathrm{D}^{b}}

\def\abs#1{\left\lvert#1\right\rvert}

\def\pb#1{#1^\sharp}
\def\pf#1{#1_\sharp}

\makeatletter
\newtheorem*{rep@theorem}{\rep@title}
\newcommand{\newreptheorem}[2]{%
\newenvironment{rep#1}[1]{%
 \def\rep@title{#2 \ref{##1}}%
 \begin{rep@theorem}}%
 {\end{rep@theorem}}}
\makeatother

\theoremstyle{definition}
\newtheorem{Thm}{Theorem}[section]
\newreptheorem{Thm}{Theorem}
\newtheorem{Prop}[Thm]{Proposition}

\newtheorem{Lem}[Thm]{Lemma}

\newtheorem{Cor}[Thm]{Corollary}
\newreptheorem{Cor}{Corollary}

\newtheorem{thm-int}{Theorem}

\newtheorem{Def-s}[Thm]{Definition}
\newtheorem{Def}[Thm]{Definition}
\newtheorem{Rem}[Thm]{Remark}

\newtheorem{Not}[Thm]{Notation}

\def\C{\ensuremath{\mathbb{C}}}

\def\Q{\ensuremath{\mathbb{Q}}}
\def\R{\ensuremath{\mathbb{R}}}
\def\Z{\ensuremath{\mathbb{Z}}}

\def\cD{\ensuremath{\mathcal D}}
\def\cE{\ensuremath{\mathcal E}}
\def\cF{\ensuremath{\mathcal F}}

\def\cL{\ensuremath{\mathcal L}}

\def\cO{\ensuremath{\mathcal O}}
\def\cP{\ensuremath{\mathcal P}}

\def\cT{\ensuremath{\mathcal T}}

\def\bP{\ensuremath{\mathbf P}}

\def\RRR{\mathfrak R}
\def\SSS{\mathfrak S}

\def\iff{\; \Longleftrightarrow \;}

\usepackage{todonotes}

\def\Cone{\mathrm{Cone}}

 \def\wbn{(\Z/2\Z)^n\rtimes S_n}

\def\Forg{\ensuremath{\mathrm{Forg}}}

\def\lsm{\ensuremath{\lesssim}}
\def\lsmeq{\ensuremath{\lessapprox}}

\def\Grg{\mathrm{K}_0}

\def\inv{\ensuremath{\mathrm{inv}}}

\title{A remark on stability conditions on smooth projective varieties}
\author{Chunyi Li}
\address{C. L.:
Mathematics Institute, University of Warwick,
Coventry, CV4 7AL,
United Kingdom}
\email{C.Li.25@warwick.ac.uk}
\urladdr{https://sites.google.com/site/chunyili0401/}

\begin{document}
\begin{abstract}
    Let $X$ be a smooth projective variety over $\mathbb C$. In this paper, we prove that $\mathrm{D}^b(X)$, the bounded derived category of coherent sheaves on $X$, always admits stability conditions in the sense of Bridgeland.
\end{abstract}
\maketitle

\setcounter{tocdepth}{1}
\tableofcontents

\section{Introduction}
The notion of a stability condition on a triangulated category $\cT$ is introduced in \cite{Bridgeland:Stab}.  One main result of \cite{Bridgeland:Stab} is that the space $\Stab(\cT)$ of all stability conditions, whenever non-empty, on $\cT$ naturally forms a complex manifold. In this paper, we prove the non-emptiness of stability conditions on $\Db(X)$, the bounded derived category of coherent sheaves on a smooth projective variety $X$.
\begin{Thm}\label{thm:main}
     Let $X$ be a smooth projective variety over $\mathbb C$. Then there exists a stability condition on $\Db(X)$.  
\end{Thm}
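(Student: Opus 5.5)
The plan is to construct a heart $\mathcal A\subset\Db(X)$ of a bounded t-structure together with a stability function $Z\colon \Grg(\mathcal A)\to\C$ that has the Harder--Narasimhan property. This is enough by Bridgeland's equivalence between stability conditions and such pairs \cite{Bridgeland:Stab}. I will work with Bridgeland's original definition: $Z$ is defined on $\Grg(\Db(X))$, or on a quotient of it, and the slicing is required to be locally finite. I will not aim for the support property with respect to a large lattice. This freedom is essential: for $\dim X\ge 2$ no stability function on $\Coh(X)$ itself exists, because skyscraper sheaves must have $Z\neq 0$, and no Bogomolov-type inequality is available on a general $X$.

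\textbf{Step 1 (the heart).} Fix an ample class $H$ and put $n=\dim X$. Filter $\Coh(X)$ by the Serre subcategories $\Coh_{\le d}(X)$ of sheaves supported in dimension $\le d$. Starting from $\Coh(X)$, I will tilt successively at the torsion pairs $(\Coh_{\le d},\Coh_{\le d}^{\perp})$, or at slope-type refinements of them. This produces a heart $\mathcal A$ in which objects supported in different dimensions sit in shifted positions, in the spirit of Bayer's polynomial stability and perverse coherent hearts. Two properties are needed from this heart. First, it should be noetherian. Second, it should be filtered by the subquotients $\Coh_{\le d}/\Coh_{\le d-1}$, each placed in a prescribed range of phases.

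\textbf{Step 2 (the central charge).} Next I choose $Z$ as a combination $Z(E)=\sum_{d} \lambda_d\, H^{d}\ch_{n-d}(E)$ with complex coefficients $\lambda_d$, possibly twisted by $e^{-B}$ for a rational $B$. The $\lambda_d$ are chosen so that the leading term on each graded piece is nonzero with argument in $(0,\pi]$. On a pure $d$-dimensional sheaf the leading term is $H^d\ch_{n-d}>0$, the degree of its support. The task is to check $Z(\mathcal A\setminus 0)\subset \mathbb H\cup\R_{<0}$. Within each graded piece this follows from positivity of degrees. The delicate part is the mixing of lower-order terms, which I would control by choosing the $\lambda_d$ with widely separated scales, so that the leading term dominates. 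Then the HN property should follow from the noetherianity of $\mathcal A$ together with discreteness of the image of $Z$ on each piece, using Bridgeland's criterion (Prop.~2.4 of \cite{Bridgeland:Stab}). Local finiteness should follow from the same discreteness.

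\textbf{Main obstacle.} The hard step is Step 2. A single objects' lower Chern characters can be arbitrarily negative relative to its leading term; on surfaces this is exactly the reason Bogomolov's inequality is needed. So ``dominance by scale separation'' cannot hold uniformly. My first attempt would be to refine the torsion pairs in Step 1 using $\mu_H$-stability at each dimension, and to place objects with very negative lower terms into shifted positions. This would have to be done so that positivity becomes a statement about the image of $Z$ only on a lattice small enough, for instance one generated by $H^{d}\ch_{n-d}$ for a few $d$, that no Bogomolov-type inequality is required. If that fails, my fallback is to make $Z$ factor through a rank-two lattice on which positivity is automatic. I would then verify the HN property directly, via a noetherian and ``weakly artinian'' argument on $\mathcal A$.
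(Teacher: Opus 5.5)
There is a genuine gap, and it is the one you flag yourself. Nothing in the proposal produces a heart $\mathcal A$ on a general $X$ with $Z(\mathcal A\setminus 0)\subset\mathbb H\cup\R_{<0}$. So Step 2 is not a technical step still to be filled in; it is the entire difficulty.

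Tilting successively at $(\Coh_{\le d},\Coh_{\le d}^{\perp})$ gives hearts of perverse-coherent type. Choosing the $\lambda_d$ at ``widely separated scales'' then gives positivity only object by object, with a separation that depends on the object. This is in effect Bayer's polynomial stability. It exists on every normal projective variety precisely because it never needs a uniform choice, and it is not a Bridgeland stability condition. Making the choice uniform, after refining by $\mu_H$-stability, is exactly what Bogomolov--Gieseker-type inequalities on the tilted hearts are needed for; for threefolds this is the Bayer--Macr\`i--Toda conjecture. These inequalities are open in general, and the original threefold form even fails on some threefolds.

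The rank-two fallback has no content yet. You name neither the heart nor the quotient $v\colon\Grg(X)\to\Z^2$. You would still need every nonzero object of the heart to have a nonzero class in a fixed half-plane of $\Lambda_\R$, which is the same positivity problem, and ``verifying HN directly'' presupposes it. Note also that the paper's notion of stability condition includes the support property. The locally finite version you settle for is therefore formally weaker, unless $Z$ is injective on $\Lambda_\R$ as in the rank-two case.

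The missing idea is to never construct a heart on $X$ directly. Instead the paper transports an existing stability condition along finite morphisms, so that positivity is inherited rather than proved.

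\emph{Pushforward to projective space.} The paper starts from $\sigma^{a,b}$ on $E^n$, which comes from the product construction for curves. It pushes this forward along $\pi\colon E^n\to E^n/(\wbn)\cong\bP^n$ using Polishchuk's criterion $f^*f_*\cP(\theta)\subset\cP(\le\theta)$. The criterion is checked with two ingredients:
\begin{enumerate}
\item the Bayer property $\sigma\lsmeq\sigma\otimes\cO(H)$, which follows from $\Pic^0$-invariance;
\item filtrations of $f^*f_*G$ with factors $w^*G\otimes\cL_w^{-1}$, $\cL_w$ effective, coming from the double covers and the double Schubert polynomial filtration.
\end{enumerate}

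\emph{Pullback to $X$.} The paper then pulls back along $\iota\colon X\hookrightarrow\bP^n$. Here the Restriction-$N$ property $\sigma\otimes\cO(N)\lsm\sigma[1]$, obtained by pulling back along isogenies, is combined with a resolution of $\iota_*\cO_X$. Together they control the phases of $E\otimes\iota_*\cO_X$. This bypasses the Bogomolov-type obstruction your approach runs into.
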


\subsection*{Bridgeland stability conditions}
We briefly recall some notions of stability conditions.
\begin{Def}[\!{\cite[Definition 3.3]{Bridgeland:Stab}}]\label{def:slicing}
Let $\cT$ be a $\C$-linear triangulated category. A \emph{slicing} $\cP$ on $\cT$ is a collection of full additive subcategories $\cP(\theta)\subset \cT$  indexed by $\theta\in \R$, satisfying the following conditions:
\begin{enumerate}
    \item For any $\theta\in \R$, we have $\cP(\theta)[1]=\cP(\theta+1)$.
    \item If $\theta_1>\theta_2$ and $F_i\in \operatorname{Obj}(\cP(\theta_i))$ for $i=1,2$, then $\Hom(F_1,F_2)=0$;
    \item \label{c}Every non-zero object $E \in \cT$ admits a finite sequence of distinguished triangles 
    \begin{equation*}
        \begin{tikzcd}[column sep=tiny]
0=E_0  \arrow[rr]& & E_1 \arrow[dl] \arrow[rr]& & E_2\arrow[r]\arrow [dl]& \cdots\arrow[r] &E_{m-1}\arrow [rr]& & E_m=E\arrow[dl]\\
& A_1 \arrow[ul,dashed, "+1"] && A_2\arrow[ul,dashed, "+1"] &&&& A_m \arrow[ul,dashed, "+1"]
\end{tikzcd}
    \end{equation*}
such that  each nonzero $A_i=\mathrm{Cone}(E_{i-1}\rightarrow E_i)$ belongs to $\cP(\theta_i)$ with real numbers $\theta_1>\dots >\theta_m$.
\end{enumerate}
\label{def:slicing}
\end{Def}

\noindent We denote by $\Grg(\cT)$ the Grothendieck group of $\cT$.

\begin{Def}[\!{\cite[Definition 1.1]{Bridgeland:Stab}}]
A \emph{Bridgeland pre-stability condition} on $\cT$ is a pair of data $\sigma=(\cP,Z)$, where 
\begin{itemize}
    \item $\cP$ is a slicing on $\cT$;
    \item $Z\colon\Grg(\cT)\rightarrow \C$ is a group homomorphism, called the \emph{central charge};
\end{itemize}
such that for any non-zero object $E\in\cP(\theta)$, we have 
\begin{align*}
    Z([E])\in \R_{>0}\cdot e^{i\pi \theta}.
\end{align*}
\label{def:prestab}
\end{Def}
It is always clear from the context when $E$ denotes an element of $\Grg(\cT)$ rather than an object of $\cT$, and we therefore omit the brackets $[-]$ in the notation.
\begin{Not}\label{rem:slicing}
A nonzero object $E \in \cP(\theta)$ is called $\sigma$\emph{-semistable} of \emph{phase} $\theta$, and we write
\[
\phi_\sigma(E) \coloneqq \theta .
\]
Simple objects in $\cP(\theta)$ are called $\sigma$\emph{-stable}.

The sequence of distinguished triangles appearing in Definition~\ref{def:slicing}(c) is unique up to isomorphism and is called the \emph{Harder--Narasimhan (HN) filtration} of the object $E$. The objects $A_i$ occurring in this filtration are called the \emph{HN factors} of $E$. We define
\[
\phi^+_\sigma(E) \coloneqq \theta_1, 
\qquad
\phi^-_\sigma(E) \coloneqq \theta_m .
\]

For an interval $I \subset \R$, we denote by $\cP(I)$ the extension-closed subcategory generated by the subcategories
$\{\cP(\theta) \mid \theta \in I\}$.
Equivalently, $\cP(I)$ is the smallest full additive subcategory of $\cT$ containing all objects whose HN factors have phases lying in $I$.
\end{Not}

Let $\Phi$ be an exact autoequivalence of $\cT$, and denote by $\Phi_*:\Grg(\cT)\to \Grg(\cT)$ the induced isomorphism on the Grothendieck group. For a pre-stability condition $\sigma=(\cP,Z)$ on $\cT$, we define the action of $\Phi$ on $\sigma$ by
\begin{align*}
    \Phi\cdot \sigma\coloneq (\Phi\circ\cP,Z\circ \Phi^{-1}_*).
\end{align*}

\noindent Let  $\Lambda$ be a free abelian group of finite rank, and let $v\colon \Grg(\cT)\twoheadrightarrow \Lambda$ be  a surjective  group homomorphism.
\begin{Def}[\!{\cite[Section 1.2]{Kontsevich-Soibelman:stability}}]
A pre-stability condition $(\cP,Z')$ is said to  satisfy the \emph{support property} with respect to $(\Lambda,v)$ if:
\begin{itemize}
\item the central charge $Z'$ factors through $\Lambda$, in other words, there exists a group homomorphism $Z\colon \Lambda \to \C$ such that $Z' = Z \circ v$;
\item 
Fix any norm $\|\blank\|$ on the finite dimensional vector space $\Lambda_\R\coloneqq \Lambda \otimes \R$. There exists a constant $C_\sigma>0$ such that, for any $\sigma$-semistable object $E$, we have
\begin{equation}\label{eq:Sp}
|Z(v(E))|\geq C_\sigma\|v(E)\|.    
\end{equation}
\end{itemize}
A pre-stability condition satisfying the support property is called a \emph{stability condition} (with respect to $(\Lambda,v)$). The set of all such stability conditions is denoted by \[\Stab_{(\Lambda,v)}(\cT).\]
\label{def:supportproperty}
\end{Def}

The set $\Stab_{(\Lambda,v)}(\cT)$  carries a natural topology induced by the following generalized metric. For stability conditions $\sigma_1=(\cP_1,Z_1)$ and $\sigma_2=(\cP_2,Z_2)$, their distance is defined by
\[
\mathrm{dist}(\sigma_1,\sigma_2)\coloneqq\sup_{\substack{ 0 \neq E\in\cT}}\left\{\abs{\phi_{\sigma_1}^-(E)-\phi_{\sigma_2}^-(E)},\abs{\phi_{\sigma_1}^+(E)-\phi_{\sigma_2}^+(E)}, \left\|Z_1-Z_2\right\|\right\}\in [0,+\infty].
\]
%where $\|\blank\|$ denotes any fixed norm on the finite-dimensional vector space $\mathrm{Hom}_{\mathbb Z}(\Lambda, \mathbb C)$.

\begin{Thm}[Bridgeland's Deformation Theorem,\cite{Bridgeland:Stab,Arend:shortproof}]\label{thm:Bridgeland}
The map forgetting the slicing
\[
\Forg_Z \colon\Stab_{(\Lambda,v)}(\cT) \longrightarrow \mathrm{Hom}_{\mathbb Z}(\Lambda, \mathbb C)\cong \C^{\rk(\Lambda)}, \qquad \sigma =(Z,\cP) \longmapsto Z 
\]
is a local homeomorphism with respect to the distance topology. In particular, whenever nonempty, the space $\Stab_{(\Lambda,v)}(\cT)$ is naturally a complex manifold of dimension $\rank(\Lambda)$.
\end{Thm}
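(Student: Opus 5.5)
This statement is Bridgeland's Deformation Theorem, quoted from \cite{Bridgeland:Stab,Arend:shortproof} rather than proved here. If I were to reprove it, I would follow the short argument of \cite{Arend:shortproof}. The plan has two parts. First, show that $\Forg_Z$ is locally injective: two stability conditions with the same central charge and distance less than $1$ coincide. Second, show it is locally surjective, with a continuous inverse: given $\sigma=(Z,\cP)$ satisfying the support property with constant $C_\sigma$, every $W\in\mathrm{Hom}(\Lambda,\C)$ with $\|W-Z\|<\epsilon\,C_\sigma$ (for small $\epsilon$, operator norm relative to the chosen norm on $\Lambda_\R$) lifts to a unique nearby $\tau=(W,\cQ)$ that again satisfies the support property.

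For injectivity, suppose $\sigma_1=(Z,\cP_1)$ and $\sigma_2=(Z,\cP_2)$ are at distance less than $1$. Take $E\in\cP_1(\theta)$ and look at its $\sigma_2$-HN filtration. All its $\sigma_2$-phases lie in $(\theta-1,\theta+1)$. The $\operatorname{Hom}$-vanishing axiom, applied against the first and last HN factors, together with the fact that $Z(E)$ is the sum of the $Z$-values of the factors, forces $\phi^\pm_{\sigma_2}(E)=\theta$. Hence $\cP_1(\theta)\subset\cP_2(\theta)$, and by symmetry the two slicings agree.

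For the construction, I would use the support property to pick $\epsilon<1/8$ so that for every $\sigma$-semistable $E$ we have $|W(E)-Z(E)|<\sin(\pi\epsilon)|Z(E)|$. Then semistable objects of phase $\phi$ have $W$-phase within $\epsilon$ of $\phi$. Fix an interval $(a,b)$ of length less than $1-2\epsilon$ and work in the quasi-abelian category $\cP((a-\epsilon,b+\epsilon))$. Define $\cQ(\psi)$ for $\psi\in(a,b)$ to be the $W$-semistable objects of $W$-phase $\psi$ in this category. I would then show the result is independent of the choice of interval, and that HN filtrations for $W$ exist. Existence of these filtrations is the main obstacle. I would deduce it from local finiteness of $\cP$, which follows from the support property, since it bounds how finely $Z$-masses can split. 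That gives the finiteness needed for a maximal-destabilizing-subobject argument, via the standard quasi-abelian HN machinery in \cite{Bridgeland:Stab}.

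The support property for $\tau$ follows from the inequality above. A $\tau$-semistable $E$ has its $\sigma$-HN factors in a narrow sector, so $|W(E)|\gtrsim\sum|Z(A_i)|\ge C_\sigma\sum\|v(A_i)\|\ge C_\sigma\|v(E)\|$, up to a constant. The bound $\mathrm{dist}(\sigma,\tau)\le\epsilon$ is immediate from the construction. Together these give continuity of the inverse and hence the local homeomorphism. The complex-manifold structure is then pulled back from $\mathrm{Hom}(\Lambda,\C)\cong\C^{\rk\Lambda}$, because the local charts given by $\Forg_Z$ have holomorphic (indeed linear) transition maps.
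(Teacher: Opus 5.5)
The paper does not prove this statement. It is quoted from \cite{Bridgeland:Stab,Arend:shortproof} and used as a black box (e.g.\ in the proof of Proposition~\ref{prop:resnforEn}), so there is no internal argument to compare with.

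What you outline is Bridgeland's original route, not the Bayer short proof you say you would follow. Its ingredients are thin quasi-abelian subcategories, the estimate $|W(E)-Z(E)|<\sin(\pi\epsilon)|Z(E)|$, and finite length of thin subcategories, which you correctly extract from the support property. Bayer avoids quasi-abelian categories altogether. He works with hearts, reducing via the $\widetilde{\mathrm{GL}}^+_2(\R)$-action to deformations that keep the heart fixed. He controls the support property through a quadratic form $Q$, which also gives an explicit neighbourhood (all $W$ with $Q$ negative definite on $\ker W$). Your route is heavier exactly where you appeal to ``standard machinery''. HN filtrations inside one thin subcategory are easy. For a general object, however, concatenating the $W$-HN filtrations of its $\sigma$-HN factors does not give decreasing phases when consecutive $\sigma$-phases are within $2\epsilon$, and this requires a separate argument.

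The step that does not work as you state it is local injectivity. Write $d=\mathrm{dist}(\sigma_1,\sigma_2)$. For $E\in\cP_1(\theta)$, Hom-vanishing against the first and last $\sigma_2$-HN factors only gives $\theta_1\le\theta+d$ and $\theta_k\ge\theta-d$, which you already know from $d<1$. Additivity of $Z$ only says that some factor lies above $\theta$ and some below, so neither ingredient produces a contradiction.

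Instead, truncate at $\theta$. Take the triangle $A\to E\to B$ with $A\in\cP_2((\theta,\theta+1))$ and $B\in\cP_2((\theta-1,\theta])$.
\begin{enumerate}
\item Since $d<1$, the object $B[-1]$ has $\sigma_1$-phases at most $\theta+d-1<\theta$.
\item Hence $A$, an extension of $E$ by $B[-1]$, lies in $\cP_1(\le\theta)$.
\item From $A\in\cP_2(>\theta)$ we also get $A\in\cP_1(>\theta-d)$, so $\operatorname{Im}\bigl(e^{-i\pi\theta}Z(A)\bigr)\le 0$.
\item But $A\in\cP_2((\theta,\theta+1))$ forces $\operatorname{Im}\bigl(e^{-i\pi\theta}Z(A)\bigr)>0$ unless $A=0$, so $A=0$.
\item Then $E\in\cP_2((\theta-1,\theta])$ together with $Z(E)\in\R_{>0}e^{i\pi\theta}$ gives $E\in\cP_2(\theta)$.
\end{enumerate}
With this replacement, your outline is a sound sketch of the cited theorem.
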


\smallskip

\noindent\textbf{Convention.} When $\mathcal T = \Db(X)$, the bounded derived category of coherent sheaves on $X$, we write $\Stab(X)$ for $\Stab(\Db(X))$, and always speak of stability conditions on $X$ rather than on $\Db(X)$.

\medskip

\noindent\textbf{Proof strategy}: Our proof of Theorem \ref{thm:main} makes essential use of the following results and techniques:
\begin{itemize}
\item \cite{Polishchuk:families-of-t-structures} For a finite morphism $f:Y\to X$ between two smooth projective varieties, under suitable technical assumption on $\sigma_Y$ (resp. $\sigma_X$), one can define an induced stability condition $\pf f\sigma_Y$ (resp. $\pb f\sigma_X$) on $X$ (resp. $Y$).

    \item \cite{yucheng:stabprodvar} For every $(a,b)\in\Q_{>0}\times \Q$, there exists a stability condition $\sigma^{a,b}$ on the product $E^n$ of an elliptic curve.
\item \cite{FLZ:ab3,stabhk} The stability conditions $\sigma^{a,b}$ are $\wbn$-invariant. Moreover,  they satisfy the support property with respect to a large lattice.

\item \cite{realred} We introduce the following properties of stability conditions, see Definition \ref{def:lsm}:
\begin{align*}
    &\text{Bayer property} & \sigma\lsmeq\sigma\otimes \cO(H);\\
    &\text{Restriction-$n$ property}& \sigma\otimes \cO(nH)\lsm \sigma[1].
\end{align*}
Here $H$ is an ample divisor of the variety. Intuitively, Bayer property is a slightly stronger property than requiring all skyscraper sheaves to be stable, while a stability condition satisfying the Restriction-$n$ property tends toward the so-called large volume limit as $n \to \infty$. Both properties behave well under the operations $\pf f$ and $\pb f$.
\end{itemize}

%Modulo out these results, the whole argument is simple:

The proof proceeds in four main steps:
\begin{enumerate}[Step 1.]
    \item Lemma \ref{lem:bayerE}: the stability condition $\sigma^{a,b}$ from \cite{yucheng:stabprodvar} always satisfies Bayer property.
    \item Proposition \ref{prop:resnforEn}: for any $N\geq 1$, when $a$ is sufficiently large, the stability condition $\sigma^{a,b}$ also satisfies the Restriction-$N$  property.
    \item Theorem \ref{thm:EntoPn}: Let $\pi:E^n\to E^n/\wbn\cong \bP^n$ be the quotient morphism. Then $\pf\pi\sigma^{a,b}$ is a well-defined stability condition on $\bP^n$ that inherits the Bayer property. 
    \item Theorem \ref{thm:stabonall}: For any closed embedding $\iota:X\hookrightarrow \bP^n$, there exist an integer $N\geq 1$ such that $\pb\iota\sigma$ is a stability condition,  provided $\sigma$ on $\bP^n$ satisfies the Bayer property and Restriction-$N$ property.
\end{enumerate}

Step 1 uses a trick from \cite{FLZ:ab3} involving Pic$^0(E)$-invariance of the stability condition. Step 2 builds on two results from \cite{stabhk}: the support property of $\sigma^{a,b}$ with respect to a large lattice, and the injectivity of the forgetting map $\operatorname{Forg}_Z$. The conclusion follows by pulling back $\sigma^{a,b}$ via an isogeny of sufficiently high degree.

Step 3 is the most technically involved part. Using Proposition \ref{prop:pfofstab} and the result from Step 1, the problem is reduced to expressing the object $\pi^* \pi_*(F)$ as a filtration with factors  $w^* (F) \otimes \mathcal{L}^{-1}$ for some translations $w$ and effective line bundles $\mathcal{L}$. 

To achieve this, we factor $\pi$ as the composition $E^n\to (\bP^1)^n\to \bP^n$. The first morphism, $E^n \to (\bP^1)^n$, is further decomposed into a sequence of double covers, handled in Lemma \ref{lem:doublecoverind}. For the second morphism, $(\bP^1)^n \to \bP^n$, we construct a filtration of the structure sheaf $\mathcal{O}_{(\bP^1)^n \times_{\bP^n} (\bP^1)^n}$, viewed as a coherent sheaf on $(\bP^1)^n \times (\bP^1)^n$. This setup is detailed in Lemma \ref{lem:Gammafiltration}. Each successive quotient in this filtration is isomorphic to the structure sheaf $\mathcal{O}_{\Gr_w}$ of the graph of an element $w\in S_n$, twisted by a line bundle $\mathcal{L}_w^{-1}$.

Step 4  has been treated in a more general context in \cite[Section 6]{realred}.  The initial proof, as in the arXiv version, contained a mistake. The corrected version, which is available on the author’s website, is also included here for completeness and consistency.

\subsection*{Acknowledgements}
The author would like to thank Arend Bayer, Aaron Bertram, Yiran Cheng, Naoki Koseki, Zhirui Li, Peize Liu, Yuan Lu, Emanuele Macrì, Alex Perry, Laura Pertusi, Yu Qiu, Lei Song, Paolo Stellari, Xiao Wang, Chunkai Xu, Qizheng Yin, and Xiaolei Zhao for enlightening discussions and helpful comments. The author is grateful to Emanuele Macrì, Alex Perry, and Xiaolei Zhao for pointing out a mistake in \cite{realred}.

\noindent The author is supported by the Royal Society URF/R1/201129 and 251025, \emph{Stability Conditions and Applications in Algebraic Geometry}. The author expresses his deepest gratitude to his advisor, Tom Nevins, for the long-term support and encouragement, and for teaching him the theory of symmetric polynomials, which remains an enduring legacy.

\section{Bayer Property}
The following relation between stability conditions, introduced in \cite{realred}, will be useful in our arguments.
\begin{Def}\label{def:lsm}
For two pre-stability conditions $\sigma,\omega$ (or equivalently, their associated slicings) on $\cT$, we define
\begin{align*}
\sigma\lsm \omega & :\iff \cP_{\sigma}(\theta)\subset\cP_{\omega}(<\theta) \text{ for every }\theta\in \R.\\
\sigma\lsmeq \omega & :\iff \cP_{\sigma}(\theta)\subset\cP_{\omega}(\leq\theta) \text{ for every }\theta\in \R.
\end{align*}
\end{Def}

The relation is transitive, in other words, $\sigma_1\lsmeq \sigma_2$ and $\sigma_2\lsmeq\sigma_3$ imply $\sigma_1\lsmeq \sigma_3$.

%The following observations will be useful.
\begin{Lem}[{\cite[Lemma 4.11]{realred}}]\label{lem:lsm1}
Let $\sigma,\omega\in\Stab(\cT)$ and $\Phi$ be an exact autoequivalence on $\cT$. Then the followings are equivalent:
\begin{enumerate}[(1)]
    \item $\sigma\lsmeq \omega$.
    %\item For every non-zero $E\in \cT$, $\phi^+_\sigma(E)\geq \phi^+_\omega(E)$ and $\phi^-_\sigma(E)\geq\phi^-_\omega(E)$.
    \item For every $\sigma$-stable object $E\in \cT$, $\phi^+_\omega(E)\leq \phi_\sigma(E)$. 
    \item For every $\omega$-stable object $E\in\cT$, $\phi_\omega(E)\leq\phi^-_\sigma(E)$.
   % \item $\cP_\omega(\theta)\subset \cP_\sigma(\geq\theta)$ for every $\theta\in \R$.
    \item $\Phi(\sigma)\lsmeq\Phi(\omega)$.
\end{enumerate}
All statements  hold for $\lsm$ by replacing $\leq$ with $<$. 
\end{Lem}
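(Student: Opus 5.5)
I will prove the cycle of implications (1)$\Rightarrow$(2)$\Rightarrow$(1), then (1)$\Rightarrow$(3)$\Rightarrow$(1), and finally (1)$\iff$(4). The strict version for $\lsm$ follows by the same argument with $<$ in place of $\leq$ throughout. The main tool is that $\cP_\omega(\leq\theta)$ and $\cP_\omega(\geq\theta)$ are extension-closed. They are also characterized by orthogonality: $F\in\cP_\omega(\leq\theta)$ if and only if $\Hom(G,F)=0$ for all $G\in\cP_\omega(>\theta)$. This holds because if $\phi^+_\omega(F)>\theta$, the first HN factor $A_1$ of $F$ gives a nonzero map $A_1\to F$. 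The dual statement holds for $\cP_\omega(\geq\theta)$.

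\emph{(1)$\iff$(2).} If $E$ is $\sigma$-stable of phase $\theta$, then $E\in\cP_\sigma(\theta)\subset\cP_\omega(\leq\theta)$, so $\phi^+_\omega(E)\leq\theta$. Conversely, each object of $\cP_\sigma(\theta)$ has a finite Jordan--H\"older filtration with $\sigma$-stable factors of phase $\theta$. This uses that $\sigma$ is a stability condition: the support property makes $\cP_\sigma(\theta)$ a finite-length abelian category. Each factor lies in $\cP_\omega(\leq\theta)$ by (2), and extension-closedness of $\cP_\omega(\leq\theta)$ gives (1).

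\emph{(1)$\iff$(3).} First I reformulate (1) by orthogonality as
\[
\cP_\omega(\theta)\subset\cP_\sigma(\geq\theta)\quad\text{for every }\theta.
\]
To see this, suppose $F\in\cP_\omega(\theta)$ and $\phi^-_\sigma(F)<\theta$. Let $B$ be the last $\sigma$-HN factor of $F$, of phase $\theta'<\theta$. There is a nonzero map $F\to B$, and by (1), $B\in\cP_\omega(\leq\theta')\subset\cP_\omega(<\theta)$. A nonzero map from $\cP_\omega(\theta)$ to $\cP_\omega(<\theta)$ contradicts the Hom-vanishing axiom.

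The reverse implication is symmetric. Suppose $\cP_\omega(\theta)\subset\cP_\sigma(\geq\theta)$ for all $\theta$, and let $E\in\cP_\sigma(\theta)$ with $\phi^+_\omega(E)=\theta''>\theta$. The first $\omega$-HN factor $A$ of $E$ lies in $\cP_\omega(\theta'')\subset\cP_\sigma(\geq\theta'')$. The nonzero map $A\to E$ is then a nonzero map from $\cP_\sigma(>\theta)$ to $\cP_\sigma(\theta)$, which is impossible.

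With the reformulation in hand, (3) is its restriction to $\omega$-stable objects. Jordan--H\"older filtrations in $\cP_\omega(\theta)$, together with extension-closedness of $\cP_\sigma(\geq\theta)$, upgrade (3) back to the reformulated inclusion.

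\emph{(1)$\iff$(4).} An exact autoequivalence $\Phi$ satisfies $\cP_{\Phi\sigma}(\theta)=\Phi(\cP_\sigma(\theta))$ for all $\theta$. It commutes with extension closures, so $\cP_{\Phi\omega}(\leq\theta)=\Phi(\cP_\omega(\leq\theta))$. Applying $\Phi$ to the inclusions in (1) gives (4), and applying $\Phi^{-1}$ gives the converse.

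The only delicate point is the existence of finite Jordan--H\"older filtrations, i.e.\ that $\cP(\theta)$ has finite length. This is where the hypothesis $\sigma,\omega\in\Stab(\cT)$, via the support property, rather than mere pre-stability, is used. I expect this to be the main obstacle and would cite the standard consequence of the support property for it.
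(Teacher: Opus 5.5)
Your proof is correct. The paper does not prove this lemma itself; it quotes it from \cite[Lemma 4.11]{realred}, so there is no in-paper argument to compare against. Your route is the standard one. You first reformulate at the level of slicings, showing that $\cP_\sigma(\theta)\subset\cP_\omega(\le\theta)$ for all $\theta$ if and only if $\cP_\omega(\theta)\subset\cP_\sigma(\ge\theta)$ for all $\theta$, using the first and last HN factors together with Hom-vanishing. You then use Jordan--H\"older filtrations to pass between stable and semistable objects.

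The step you defer is immediate from the support property as stated in Definition~\ref{def:supportproperty}. Take a chain $0\subsetneq F_1\subsetneq\cdots\subsetneq F_k=E$ in $\cP_\sigma(\theta)$. Each $F_i$ is semistable and satisfies $|Z(v(F_i))|\le|Z(v(E))|$. The values $|Z(v(F_i))|$ strictly increase, because each nonzero quotient $F_{i+1}/F_i$ has $Z\in\R_{>0}e^{i\pi\theta}$. So the $v(F_i)$ are distinct lattice points in the ball of radius $|Z(v(E))|/C_\sigma$, which bounds $k$ and gives finite length.

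You are also right that only the two upgrades from stable to semistable objects need the support property. The reformulation and the trivial directions hold for arbitrary slicings. This matches how the paper uses the lemma: in Lemma~\ref{lem:pbpfcompare} it applies only the trivial direction to pre-stability conditions.
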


\begin{Lem}\label{lem:bayerE}
Let  $\sigma$ be a stability condition on $C\times M$, where $C$ is a smooth projective curve with  genus $\geq 1$. For any closed point $q\in C$ and  $H=\{q\}\times M$, we have 
\begin{align*}
    \sigma\lsmeq \sigma\otimes\cO(H).
\end{align*}
\end{Lem}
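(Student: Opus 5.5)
The plan is to exploit the freedom to move the point $q$ inside $C$, using the fact that $\sigma$ is invariant under tensoring by line bundles pulled back from $\Pic^0(C)$; this is the trick from \cite{FLZ:ab3} mentioned in the strategy.

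\emph{Step 1: invariance.} For $L\in\Pic^0(C)$, tensoring by $p_C^*L$ acts trivially on numerical classes. So $t\mapsto\sigma\otimes p_C^*L_t$, for a path $L_t$ in the connected group $\Pic^0(C)$, is a continuous path in $\Stab_{(\Lambda,v)}(C\times M)$ along which the central charge is constant. By Theorem~\ref{thm:Bridgeland} the fibres of $\Forg_Z$ are discrete, so the path is constant and $\sigma\otimes p_C^*L=\sigma$. I would verify continuity of the action in the distance topology by the standard argument that a small deformation of the autoequivalence perturbs phases only slightly, using openness of stability in families. The consequence used below is that $\phi^\pm_\sigma(F\otimes\cO(-H_q))$ does not depend on $q\in C$, where $H_q=\{q\}\times M$. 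Indeed $\cO(H_q-H_{q_0})\in p_C^*\Pic^0(C)$, and more generally $\phi^\pm_\sigma(F\otimes\cO(-D))$ depends only on $\deg D$ for an effective divisor $D$ on $C$ pulled back to $C\times M$. The hypothesis $g(C)\geq 1$ enters here, since it makes $\Pic^0(C)$ a nontrivial connected family moving $q$. When $g=0$ all points are linearly equivalent and the argument below collapses.

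\emph{Step 2: reduction.} Write $\omega=\sigma\otimes\cO(H)$; its stable objects are the $F\otimes\cO(H)$ with $F$ $\sigma$-stable. By Lemma~\ref{lem:lsm1}(2) it suffices to show the following: for every $\sigma$-stable $E$ of phase $\theta$, $\phi^+_\sigma(E\otimes\cO(-H))\leq\theta$. Equivalently, $\Hom(G,E(-H_q))=0$ for every $\sigma$-semistable $G$ with $\phi_\sigma(G)>\theta$, and by Step 1 we may take $q$ general.

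\emph{Step 3: main argument.} Suppose $G\to E(-H_q)$ is nonzero, with $G$ the first HN factor. Then $\Hom(G,E)=0$, so the composite with $E(-H_q)\to E$ vanishes and the map factors through $E|_{H_q}[-1]\to E(-H_q)$. By Step 1 the first HN factor of $E(-H_{q'})$ is $G\otimes\cO(H_q-H_{q'})$ for every $q'$. Iterating with distinct points $q_1,\dots,q_k$ gives two things: nonzero maps $G_k\to E(-H_{q_1}-\dots-H_{q_k})$ of phase $>\theta$, and the identity
\[\phi^+_\sigma(E(-kH))=\phi^+_\sigma(E(-D_k))\qquad\text{for all effective } D_k \text{ of degree } k.\]
I then aim to derive a contradiction as follows. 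Since the map must factor through $E|_{H_{q'}}[-1]$ for every $q'$ in a dense set, a nonzero morphism out of a single object would be "supported" on every fibre $H_{q'}$. Concretely, the map $G\otimes\cO(-H_{q'})\to E(-H_q-H_{q'})$ restricted to $H_{q'}$ would be nonzero for all $q'$ in an infinite set. Yet $\Hom(G,E(-H_q))$ is finite dimensional and the restriction maps to distinct fibres are independent. Making this precise is the main obstacle. My first attempt would be to pass to the relative family over $C$: consider $\mathcal{G}\to\mathcal{E}$ on $C\times C\times M$, and use that the locus of $q'$ where the morphism factors through the fibre is closed and proper. If that fails, I would fall back on semicontinuity of $\Hom$ in the family $q'\mapsto E(-H_{q'})$.
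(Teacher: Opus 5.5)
There is a genuine gap, and it sits exactly where you locate "the main obstacle." Your Steps 1--2 and the first half of Step 3 are sound. If $G$ is the first HN factor of $E(-H)$ with $\phi_\sigma(G)>\theta$, then $\Hom(G,E)=0$, so the map factors through $\iota_*\iota^*E[-1]\to E(-H)$, whence $\phi_\sigma(G)\le\phi^+_\sigma(\iota_*\iota^*E)-1$.

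The contradiction you then aim for does not exist in general: a nonzero morphism in $\Db$ can factor through objects supported on every fibre. Already on a curve, a nonzero element of $\Hom(\cO_C,\omega_C[1])=H^1(\omega_C)$ factors as $\cO_C\to\cO_p\xrightarrow{\delta}\omega_C[1]$ for every point $p$. Indeed, the connecting map $H^0(\omega_C(p)|_p)\to H^1(\omega_C)$ is injective, because by the residue theorem $H^0(\omega_C)\to H^0(\omega_C(p))$ is an isomorphism. This has the same shape as your situation: a source of larger phase mapping to a twist of $E$. So neither finite-dimensionality of $\Hom$ nor semicontinuity in $q'$ will rule it out. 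Moreover, moving $q$ only produces maps out of \emph{different} sources $G\otimes\cO(H_q-H_{q'})$, and the "iteration" to $E(-H_{q_1}-\dots-H_{q_k})$ is unjustified and not needed.

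The missing idea is to apply $\Pic^0(C)$-invariance to the restricted object, not to the point. For all $L\in\Pic^0(C)$ we have $\iota_*\iota^*E\otimes p_C^*L\cong\iota_*\iota^*E$, so by uniqueness of HN filtrations every HN factor of $\iota_*\iota^*E$ is $\Pic^0(C)$-fixed. Since $g(C)\ge 1$, this forces each factor to be supported over finitely many points of $C$. Otherwise, pushing a sufficiently positive twist of some cohomology sheaf forward to $C$ would give a bundle $V$ of rank $r>0$ with $V\otimes L\cong V$, hence $L^{\otimes r}\cong\cO$ for all $L\in\Pic^0(C)$, which is impossible.

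Let $G'$ be the first HN factor of $\iota_*\iota^*E$. Since $\cO(-H)$ is trivial near its support, $\Hom(G',E(-H)[1])\cong\Hom(G',E[1])$. The long exact sequence of $\Hom(G',-)$ applied to $E(-H)\to E\to\iota_*\iota^*E$ then gives $\Hom(G',E)\neq 0$ or $\Hom(G',E[1])\neq 0$, i.e.\ $\phi^+_\sigma(\iota_*\iota^*E)\le\theta+1$. Combined with your factorization this yields $\phi_\sigma(G)\le\theta$, the desired contradiction. This is precisely the Claim in the paper's proof, and it is the step your proposal lacks.
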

\begin{proof}
    Let 
    \begin{align*}
        \iota:H\to C\times M
    \end{align*} denote the inclusion. For any   $\sigma$-stable object $F$, we have  the distinguished triangle:
    \begin{align}\label{eq33}
        \iota_*\iota^*F[-1]\to F\otimes \cO(-H)\to F\to \iota_*\iota^*F.
    \end{align}
   \noindent\textbf{Claim}: $\phi^+_\sigma(\iota_*\iota^*F)\leq \phi_\sigma(F)+1$.
    \begin{proof}[Proof of the Claim]
    Let $G$ be the first HN factor of $\iota_*\iota^*F$ with respect to $\sigma$, so that
    \[\phi^+_\sigma(\iota_*\iota^*F)=\phi_\sigma(G).\] By \cite[Theorem 3.5.1]{Polishchuk:families-of-t-structures} (see also \cite[Remark 2.24]{stabhk} for more details), the group $\Pic^0(C)$ acts trivially on $\sigma$. As $\Pic^0(C)$ acts trivially on $\iota_*\iota^*F$, every HN factor of $\iota_*\iota^*F$ is also fixed by $\Pic^0(C)$. In particular, the object $G$ is supported on $H$ as well.

    Applying $\Hom(G,-)$ to the distinguished triangle \eqref{eq33}, we obtain a long exact sequence
    \begin{align}
        \dots \to \Hom(G,F)\to \Hom(G,\iota_*\iota^*F)\to \Hom(G, F\otimes \cO(-H)[1])\to \dots.
    \end{align}
    As $G$ is supported on $H$, we have $\Hom(G,F\otimes \cO(-H)[1])=\Hom(G,F[1])$. 
    
    Since $\Hom(G,\iota_*\iota^*F)\neq 0$, we have $\Hom(G,F)$ or $\Hom(G,F[1])\neq 0$.
    
   As both $G$ and $F$ are $\sigma$-semistable, this implies $\phi_\sigma(G)\leq \phi_\sigma(F[1])=\phi_\sigma(F)+1$, proving the claim.  
    \end{proof}
   Returning to the proof of the lemma, we have  
    \begin{align}\label{eq:244}
       \phi^+_{\sigma\otimes \cO(H)}(F)= \phi^+_\sigma(F\otimes \cO(-H))\leq \max\{\phi_\sigma(F),\phi^+_\sigma(\iota_*\iota^*F[-1])\}= \phi_\sigma(F).
    \end{align}
   Here, the `$\leq$' is due to the distinguished triangle \eqref{eq33}, while the last `$=$' is due to the Claim.
   
   Since \eqref{eq:244} holds for every $\sigma$-stable object $F$, Lemma \ref{lem:lsm1} implies that $\sigma\lsmeq\sigma\otimes \cO(H)$ as claimed.
\end{proof}

\section{Inducing Stability conditions}
\begin{Def}\label{def:polyshchukstabs}
Let $f\colon Y \to X$ be a finite morphism between smooth projective varieties.

Given a pre-stability condition $\sigma_Y=(\cP,Z)$ on $\Db(Y)$, we define its \emph{pushforward} to $\Db(X)$ as 
\[
\pf f\sigma_Y \coloneqq (\pf f\cP,\pf f Z),
\]
where
\begin{align*}
   \pf f\cP(\theta) 
   &\coloneqq \{\, E\in \Db(X)\mid f^*E\in \cP(\theta)\,\},
   \qquad \text{for every }\theta\in \R, \\
   \pf f Z 
   &\coloneqq Z\circ f^*\colon 
   \Grg(X)\xrightarrow{\,f^*\,}\Grg(Y)\to \C .
\end{align*}

Conversely, given a pre-stability condition $\sigma_X=(\cP,Z)$ on $\Db(X)$, we define its \emph{pullback} to $\Db(Y)$ as 
\[
\pb f\sigma_X \coloneqq (\pb f\cP,\pb f Z),
\]
where
\begin{align*}
   \pb f\cP(\theta) 
   &\coloneqq \{\, E\in \Db(Y)\mid f_*E\in \cP(\theta)\,\},
   \qquad \text{for every }\theta\in \R, \\
   \pb f Z 
   &\coloneqq Z\circ f_*\colon 
   \Grg(Y)\xrightarrow{\,f_*\,}\Grg(X)\to \C .
\end{align*}
\end{Def}

\begin{Lem}\label{lem:tensorpf}
Assume that $\pf f\sigma$ is a pre-stability condition on $\Db(X)$. Then for any line bundle $\cL$ on $X$, we have
\[
(\pf f\sigma)\otimes \cL \;=\; \pf f(\sigma\otimes f^*\cL).
\]
\end{Lem}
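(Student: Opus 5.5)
Since both sides are pairs (slicing, central charge), the plan is to compare the two components separately, using the definition of $\sigma\otimes\cL$ as the action of the autoequivalence $\Phi=-\otimes\cL$ from the introduction: $\Phi\cdot\sigma=(\Phi\circ\cP,Z\circ\Phi_*^{-1})$. Concretely, $(\sigma\otimes\cL)$ has slicing $\cP_{\sigma\otimes\cL}(\theta)=\{E\otimes\cL\mid E\in\cP_\sigma(\theta)\}=\{F\mid F\otimes\cL^{-1}\in\cP_\sigma(\theta)\}$ and central charge $Z(-\otimes\cL^{-1})$. The one geometric input is the projection-type identity $f^*(E\otimes\cL)\cong f^*E\otimes f^*\cL$ for $E\in\Db(X)$, valid for derived pullback of any morphism. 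It holds both on objects and, after passing to $\Grg$, as an identity $f^*\circ(-\otimes\cL)=(-\otimes f^*\cL)\circ f^*$ of group homomorphisms.

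For the slicings, fix $\theta$ and unwind both sides. We have $E\in(\pf f\cP)\otimes\cL\,(\theta)$ iff $E\otimes\cL^{-1}\in\pf f\cP(\theta)$ iff $f^*(E\otimes\cL^{-1})\in\cP(\theta)$. On the other side, $E\in\pf f(\cP\otimes f^*\cL)(\theta)$ iff $f^*E\in(\cP\otimes f^*\cL)(\theta)$ iff $f^*E\otimes f^*\cL^{-1}\in\cP(\theta)$. Since $f^*(E\otimes\cL^{-1})\cong f^*E\otimes f^*\cL^{-1}$ and the subcategories $\cP(\theta)$ are closed under isomorphism, the two conditions agree.

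For the central charges, the same identity on $\Grg$ gives
$$(\pf fZ)(E\otimes\cL^{-1})=Z(f^*E\otimes f^*\cL^{-1})=(Z\circ(-\otimes f^*\cL^{-1}))(f^*E),$$
and the right-hand side is exactly the central charge of $\pf f(\sigma\otimes f^*\cL)$.

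The only point requiring care is the meaning of the right-hand side when $\pf f(\sigma\otimes f^*\cL)$ is not a priori known to be a pre-stability condition. Since the two data coincide as a pair of collections of subcategories together with a homomorphism, and the left-hand side is a pre-stability condition (being the image of one under an autoequivalence), the right-hand side is one too. I expect no real obstacle beyond this bookkeeping, and keeping consistent the sign convention $\cL$ versus $\cL^{-1}$ in the action.
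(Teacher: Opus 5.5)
Your proposal is correct and is essentially the paper's own argument. Like the paper, you unwind the slicing and the central charge on both sides and use $f^*(E\otimes\cL^{-1})\cong f^*E\otimes f^*\cL^{-1}$. Your closing remark, that the right-hand side is automatically a pre-stability condition because it coincides with the left-hand side, is harmless bookkeeping that the paper leaves implicit.
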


\begin{proof}
The statement follows directly from the definitions.

\noindent\emph{Slicings.}
For any $\theta\in\R$, we compute
\begin{align*}
\cP_{(\pf f\sigma)\otimes \cL}(\theta)
&= \cP_{\pf f\sigma}(\theta)\otimes \cL 
= \{\,F \mid f^*F\in \cP_\sigma(\theta)\,\}\otimes \cL 
= \{\,F\otimes \cL \mid f^*F\in \cP_\sigma(\theta)\,\} \\
&= \{\,G \mid f^*(G\otimes \cL^{-1})\in \cP_\sigma(\theta)\,\} 
= \{\,G \mid f^*G\in \cP_\sigma(\theta)\otimes f^*\cL\,\} 
= \{\,G \mid f^*G\in \cP_{\sigma\otimes f^*\cL}(\theta)\,\} \\
&= \cP_{\pf f(\sigma\otimes f^*\cL)}(\theta).
\end{align*}

\noindent\emph{Central charges.}
For any $F\in\Db(X)$, we have
\begin{align*}
Z_{(\pf f\sigma)\otimes \cL}(F)
= Z_\sigma\!\bigl(f^*(F\otimes \cL^{-1})\bigr) 
= Z_{\sigma\otimes f^*\cL}(f^*F) 
= Z_{\pf f(\sigma\otimes f^*\cL)}(F).
\end{align*}
This completes the proof.
\end{proof}

\begin{Lem}\label{lem:pbpfcompare}
Let $\sigma \lsmeq \omega$ be pre-stability conditions. Assume that both $\pf f\sigma$ and $\pf f\omega$ (respectively, $\pb f\sigma$ and $\pb f\omega$) are pre-stability conditions. Then
\[
\pf f\sigma \lsmeq \pf f\omega
\qquad
(\text{respectively, } \pb f\sigma \lsmeq \pb f\omega).
\]
\end{Lem}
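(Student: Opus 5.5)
We prove the pushforward case; the pullback case is obtained verbatim by exchanging the roles of $f^*$ and $f_*$ (and of $X$ and $Y$). The approach is to check Definition~\ref{def:lsm} directly for the induced slicings, using only that $f^*$ is exact and that the slicing on the target is determined by where $f^*$ sends objects. Fix $\theta\in\R$ and a nonzero object $E\in\cP_{\pf f\sigma}(\theta)$; we must show $E\in\cP_{\pf f\omega}(\leq\theta)$, i.e.\ that $\phi^+_{\pf f\omega}(E)\leq\theta$.

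\textbf{Step 1: transfer HN filtrations along $f^*$.} Since $\pf f\omega$ is assumed to be a pre-stability condition, $E$ has an HN filtration with respect to $\pf f\omega$. Its factors $A_i$ lie in $\cP_{\pf f\omega}(\theta_i)$ with $\theta_1>\dots>\theta_m$, so by definition $f^*A_i\in\cP_\omega(\theta_i)$. Applying the exact functor $f^*$ to the filtration gives a filtration of $f^*E$ whose factors are $f^*A_i$. Each $f^*A_i$ is nonzero because $f$ is finite and surjective (flat, by miracle flatness, as $X,Y$ are smooth of the same dimension), so $f_*f^*A_i\cong A_i\otimes f_*\cO_Y$ contains $A_i$ as a summand. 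The phases are strictly decreasing, so by uniqueness this is the $\omega$-HN filtration of $f^*E$. Consequently
\[
\phi^+_{\pf f\omega}(E)=\phi^+_\omega(f^*E).
\]

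\textbf{Step 2: apply the hypothesis.} By definition $f^*E\in\cP_\sigma(\theta)$. The hypothesis $\sigma\lsmeq\omega$ gives $f^*E\in\cP_\omega(\leq\theta)$, so $\phi^+_\omega(f^*E)\leq\theta$. Combined with Step 1, this yields $E\in\cP_{\pf f\omega}(\leq\theta)$. Since $\theta$ and $E$ were arbitrary, $\pf f\sigma\lsmeq\pf f\omega$.

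\textbf{Pullback case and the main obstacle.} In the pullback case the same argument runs with $f_*$, which is exact, and faithful on objects because $f$ is finite (affine). The only point needing care is the non-vanishing of the transported HN factors, which is what identifies the transported filtration as the genuine HN filtration. One can avoid needing flatness by arguing differently: if some $f^*A_i$ vanished, simply drop it, and the remaining filtration is still HN with the same or smaller $\phi^+$. This only yields the inequality $\phi^+_\omega(f^*E)\leq$ the surviving top phase, which is the wrong direction. So we will rely on the non-vanishing, which holds here because $f^*$ and $f_*$ are conservative for finite surjective $f$ (and $f_*$ always for finite $f$).
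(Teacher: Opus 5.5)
Your proof is correct and follows the paper's argument: you transport the $\pf f\omega$-HN filtration of $E$ along the exact functor $f^*$ to obtain the $\omega$-HN filtration of $f^*E$, and then compare $\phi^+$ using $\sigma\lsmeq\omega$. The non-vanishing of the transported factors, which the paper leaves implicit, already follows from the hypothesis that $\pf f\omega$ is a pre-stability condition: a nonzero $A$ with $f^*A=0$ would lie in $\cP_{\pf f\omega}(\theta)$ for every $\theta$, which forces $\Hom(A,A)=0$. So your flatness argument, which uses a surjectivity assumption not stated in the lemma, is unnecessary.
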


\begin{proof}
We prove the statement for $\pf f$; the argument for $\pb f$ is identical.

Fix any $\theta\in\R$ and  $F\in \cP_{\pf f\sigma}(\theta)$. By definition, we have $f^*F \in \cP_\sigma(\theta)$.

Let
\begin{align}\label{eq:filF}
0 = F_0 \to F_1 \to \cdots \to F_s = F   
\end{align}

be the HN filtration of $F$ with respect to $\pf f\omega$, and denote the corresponding HN factors by
\[
G_j = \Cone(F_{j-1}\to F_j)\in \cP_{\pf f\omega}(\theta_j),
\qquad
\theta_1 > \cdots > \theta_s.
\]
By definition of $\pf f\omega$, we have
\[
f^*G_j \in \cP_\omega(\theta_j)
\quad \text{for all } j.
\]

Pulling back the filtration \eqref{eq:filF}  via $f^*$, we obtain a filtration of $f^*F$ whose factors are
\[
f^*G_1,\, f^*G_2,\, \dots,\, f^*G_s.
\]
In particular, these are precisely the HN factors of $f^*F$ with respect to $\omega$.

By Lemma~\ref{lem:lsm1} and the assumption $\sigma\lsmeq\omega$, we obtain
\begin{align*}
\theta
= \phi_\sigma(f^*F)
\;\ge\;
\phi^+_\omega(f^*F)
= \phi_\omega(f^*G_1)
= \theta_1.
\end{align*}
Therefore, by \eqref{eq:filF}, we have
$F \in \cP_{\pf f\omega}(\le \theta)$. Note that this is for any $F\in \cP_{\pf f\sigma}(\theta)$, hence we have
\[
\cP_{\pf f\sigma}(\theta) \subset \cP_{\pf f\omega}(\le \theta) \text{ for any }\theta\in \R.
\]
By Definition \ref{def:lsm}, this is $\pf f\sigma \lsmeq \pf f\omega$.
\end{proof}

\begin{Prop}[\!{\!\cite[Section~2]{Polishchuk:families-of-t-structures}}]\label{prop:pfofstab}
Let $f\colon Y \to X$ be a surjective finite morphism between smooth projective varieties, and let $\sigma$ be a stability condition on $\Db(X)$. Assume that
\begin{equation}\label{eq23}
    f^*f_*\cP_\sigma(\theta)\subset \cP_\sigma(\le \theta),
    \qquad \forall\,\theta\in\R.
\end{equation}
Then $\pf f\sigma$ is a stability condition on $\Db(X)$.
\end{Prop}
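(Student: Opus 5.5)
The plan is to read the statement with $\sigma$ a stability condition on $\Db(Y)$ (so that $\pf f\sigma$ on $\Db(X)$ makes sense), and to check the axioms of Definitions \ref{def:slicing}, \ref{def:prestab} and \ref{def:supportproperty} for $\pf f\sigma$ one at a time. First we record two standard facts. Since $X,Y$ are smooth and $f$ is finite surjective, $f$ is flat (miracle flatness). Hence $f^*$ and $f_*$ are exact, $f^*\dashv f_*\dashv f^!$ with $f^!=f^*(\blank)\otimes\omega_f$, and $f_*f^*E\cong E\otimes f_*\cO_Y$. In characteristic $0$ the trace splits $\cO_X\to f_*\cO_Y$, so every $E$ is a direct summand of $f_*f^*E$.

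\textbf{Easy axioms.} The shift axiom and the central charge condition are immediate, since $f^*$ commutes with $[1]$ and $\pf fZ(E)=Z(f^*E)$. For Hom-vanishing, let $F_i\in\pf f\cP(\theta_i)$ with $\theta_1>\theta_2$. Then $\Hom(F_1,F_2)$ is a summand of $\Hom(F_1,f_*f^*F_2)=\Hom(f^*F_1,f^*F_2)=0$. The support property is also formal. Take $\Lambda$ for $X$ to be the numerical K-group modulo torsion (or the image of the Chern character). Then $f^*$ induces an injective map of lattices, because $f_*f^*=(\cdot)\otimes f_*\cO_Y$ and after tensoring with $\Q$ this is multiplication by a class with nonzero rank $\deg f$, hence invertible. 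Therefore $\|v(E)\|\le c\,\|v(f^*E)\|$. For $E$ semistable for $\pf f\sigma$, the object $f^*E$ is $\sigma$-semistable, so $|Z(f^*E)|\ge C_\sigma\|v(f^*E)\|\ge (C_\sigma/c)\|v(E)\|$.

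\textbf{Existence of HN filtrations.} This is the main obstacle. I would treat it through t-structures. Fix $\theta$ and set $\cD^{>\theta}=\{E: f^*E\in\cP_\sigma(>\theta)\}$ and $\cD^{\le\theta}=\{E:f^*E\in\cP_\sigma(\le\theta)\}$. The claim to prove is that $(\cD^{>\theta},\cD^{\le\theta})$ is a t-structure on $\Db(X)$ whose truncations are sent by $f^*$ to the $\sigma$-truncations. Hom-orthogonality follows exactly as above. The key input is the hypothesis \eqref{eq23}, which says that the comonad $f^*f_*$ does not increase $\phi^+$. I would also show that it does not decrease $\phi^-$, i.e.\ that $f^*f_*\cP_\sigma(\theta)\subset\cP_\sigma(\ge\theta)$. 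My first attempt would be to use adjunction: for $G'\in\cP_\sigma(<\theta)$, compute $\Hom(G',f^*f_*G)$ via $f^!$ and Serre duality, and feed in \eqref{eq23} applied to $G'$; if that fails, I would instead cite the comonadic descent argument of \cite[Section~2]{Polishchuk:families-of-t-structures}.

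Granting that $f^*f_*$ is t-exact for each $\cP_\sigma(>\theta)$, the $\sigma$-truncation triangle $T\to f^*E\to T'$ is preserved by the comonad. Equivalently, it is compatible with the descent datum $f^*E\to f^*f_*f^*E$. Flat descent for $\Db$ along the finite flat $f$ (again as in Polishchuk, or using the splitting by the trace to realise $T$ as $f^*$ of a summand of $f_*T$) then produces a triangle $E'\to E\to E''$ on $X$ pulling back to it. Iterating over the finitely many phases of the $\sigma$-HN filtration of $f^*E$ gives the HN filtration of $E$ for $\pf f\sigma$, which completes the proof.
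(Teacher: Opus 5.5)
You are right that $\sigma$ has to live on $\Db(Y)$, and your shift, central charge and Hom-vanishing checks are fine. The gap is in the HN step. You plan to prove $f^*f_*\cP_\sigma(\theta)\subset\cP_\sigma(\ge\theta)$, i.e.\ that $f^*f_*$ is t-exact for the t-structures with aisle $\cP_\sigma(>\theta)$. This is false, already in the cases where the proposition is applied. Let $f\colon E\to\bP^1$ be the quotient by $z\mapsto -z$, and let $\sigma$ be the standard stability condition on $E$ (heart $\Coh(E)$, $Z=-\deg+i\rk$). It satisfies \eqref{eq23} by the argument of Lemma~\ref{lem:doublecoverind}. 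But $f_*\cO_E\cong\cO_{\bP^1}\oplus\cO_{\bP^1}(-2)$, so $f^*f_*\cO_E\cong\cO_E\oplus f^*\cO_{\bP^1}(-2)$. The second summand has degree $-4$, hence phase strictly below $\phi_\sigma(\cO_E)=\frac12$. So hypothesis \eqref{eq23} is genuinely one-sided. Your adjunction attempt cannot produce the other side either: moving $f^*f_*$ across with $f^!=f^*(\blank)\otimes\omega_f$ gives $\Hom(G',f^*f_*G)\cong\Hom\bigl(f^*f_*(G'\otimes\omega_f),G\bigr)$, and \eqref{eq23} says nothing about $G'\otimes\omega_f$.

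What adjunction does give is the statement for the \emph{left} adjoint $f_!=f_*(\blank\otimes\omega_f)$ of $f^*$. For $G\in\cP_\sigma(\theta)$ and $G'\in\cP_\sigma(<\theta)$,
$\Hom(f^*f_!G,G')\cong\Hom(f_!G,f_*G')\cong\Hom(G,f^*f_*G')=0$ by \eqref{eq23}, so $f^*f_!\cP_\sigma(\theta)\subset\cP_\sigma(\ge\theta)$. This is the paper's key claim \eqref{eq:key}. It is exactly the input for \cite[Theorem~2.1.2]{Polishchuk:families-of-t-structures} with $\Phi=f^*$, $\Psi=f_!$, which gives for each $\theta$ a t-structure on $\Db(X)$ making $f^*$ t-exact; the aisle comes from \cite{AJS:tstructure} applied to the image of the $\sigma$-aisle under $f_!$. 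HN filtrations then follow as in \cite[Proposition~2.2.1, Corollary~2.2.2]{Polishchuk:families-of-t-structures}.

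This route also avoids your descent step, which is the second weak point. ``Flat descent for $\Db$'' is not a formal fact at the triangulated level. The trace splitting only makes objects of $\Db(X)$ summands of $f_*f^*(\blank)$; it does not show that the truncation $T$ is a pullback. Even within your framework, compatibility of $T\to f^*E\to T'$ with the coaction needs only \eqref{eq23}, since $\Hom(T,f^*f_*T'[k])=0$ for $k\le 0$. What is missing is a descent theorem, not t-exactness.

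Finally, on the support property: $\sigma$ is only assumed to satisfy it for some $(\Lambda_Y,v_Y)$, so $v_Y\circ f^*$ need not be injective on $\Kn(X)$. Use the image lattice of $v_Y\circ f^*$, as the paper does, and the inequality is immediate.
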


\begin{proof}
We apply \cite[Theorem~2.1.2]{Polishchuk:families-of-t-structures}\footnote{It is worth noting that, in the proof of \cite[Lemma~2.1.1 and Theorem~2.1.2]{Polishchuk:families-of-t-structures}, the application of \cite[Theorem~A.1]{AJS:tstructure} requires the generating set of the pre-aisle to consist entirely of compact objects. In our paper, all varieties are assumed to be smooth, so this condition is always satisfied.} with
\[
\cD_1=\Db(X), \qquad 
\cD_2=\Db(Y), \qquad 
\Phi=f^*, \qquad 
\Psi \text{ the left adjoint of } f^*.
\]

For every $\theta\in\R$, we claim that
\begin{equation}\label{eq:key}
    f^*\circ\Psi\bigl(\cP_\sigma(\theta)\bigr)\subset \cP_\sigma(\ge \theta).
\end{equation}
Indeed, let $E\in \cP_\sigma(\theta)$ and $F\in \cP_\sigma(<\theta)$. Then
\begin{align*}
\Hom_{\Db(X)}\bigl(f^*\Psi(E),F\bigr)
\cong \Hom_{\Db(Y)}\bigl(\Psi(E),f_*F\bigr) 
\cong \Hom_{\Db(X)}\bigl(E,f^*f_*F\bigr) 
= 0,
\end{align*}
where the last equality follows from assumption~\eqref{eq23}.
This proves~\eqref{eq:key}.

By the same argument as in
\cite[Proposition~2.2.1 and Corollary~2.2.2]{Polishchuk:families-of-t-structures},
it follows that the pair $\pf f\sigma$ defines a pre-stability condition on $\Db(X)$.

Assume that $\sigma$ satisfies the support property with respect to a lattice
$v\colon \Grg(Y)\to \Lambda$.
It is clear from the definition that
$\pf f\sigma$ satisfies the support property with respect to
$
v\circ f^*\colon \Grg(X)\to \Lambda',
$
where $\Lambda'\subset \Lambda$ is the image of $v\circ f^*$.
Hence $\pf f\sigma$ is a stability condition on $\Db(X)$.
\end{proof}

\section{Double Schubert polynomials}

Let $R=\C[x_1,x_2,\dots,x_n]$. The symmetric group $S_n$ acts on $R$ by permuting the variables. In this section, we study a natural filtration on the $R\otimes R$-module (equivalently, the $R$-bimodule)
\[
R\otimes_{R^{S_n}} R.
\]
This filtration is often called the Soergel bimodule filtration, and is discussed in much greater depth in \cite{Soergel:bimodule}. Since this filtration will play an important role later, we recall the relevant details here, following the classical exposition in \cite{Macdonald1991}.

For each $w\in S_n$, define
\begin{align*}
    \inv(w) &\coloneq \{(i,j)\mid 1\le i<j\le n,\; w(i)>w(j)\},\\
    \Delta_w &\coloneq \prod_{(i,j)\in \inv(w)}(x_i-x_j).
\end{align*}

We view $S_n$ as a Coxeter group with simple reflections
\[
s_j=(j\ j+1), \qquad 1\le j\le n-1,
\]
and denote by $\ell(w)$ the Bruhat length of $w$. For each $1\le j\le n-1$, define the Demazure operator
\[
\partial_j \coloneq \frac{\id - s_j}{x_j-x_{j+1}}.
\]
Given a reduced expression $w=s_{a_1}\cdots s_{a_{\ell(w)}}$, we have
\[
\partial_w \coloneq \partial_{a_1}\cdots \partial_{a_{\ell(w)}}.
\]
This operator is independent of the choice of reduced expressions. For the identity element $e\in S_n$, we set $\partial_e=\id$.

The \emph{Schubert polynomial} associated to $w\in S_n$ is defined by
\[
\SSS_w(x)\coloneq \partial_{w^{-1}w_0}\bigl(x_1^{n-1}x_2^{n-2}\cdots x_{n-1}\bigr),
\]
where $w_0$ denotes the longest element of $S_n$. It is a classical result that
$\{\SSS_w(x)\}_{w\in S_n}$
forms a free basis of $R$ as an $R^{S_n}$-module.

The \emph{double Schubert polynomial} (see, for example, \cite[(6.1) and (6.3)]{Macdonald1991}) is defined by
\begin{equation}\label{eq:doubleschub}
    \SSS_w(x;y)
    \coloneq \partial_{w^{-1}w_0}\Delta(x;y)
    = \sum_{\substack{u,v\in S_n; \; w=v^{-1}u; \\ \ell(w)=\ell(v)+\ell(u)}}
    \SSS_u(x)\,\SSS_v(-y),
\end{equation}
where
\[
\Delta(x;y)\coloneq \prod_{i+j\le n}(x_i-y_j).
\]

For any $w'\in S_n$ with $\ell(w')\leq \ell(w)$, we have
\begin{equation}\label{eq:wactionondoubleschub}
    \SSS_w(w'x;x)
    = (-1)^{\ell(w)}\,\delta_{w,w'}\,\Delta_w(x).
\end{equation}

\noindent\emph{Proof of \eqref{eq:wactionondoubleschub}.}
For any $\tau\in S_n$ and $h(x)\in R$, one has
\[
h(\tau x)
=
\Bigl(
(-1)^{\ell(\tau)}\Delta_\tau(x)\,\partial_\tau
+\sum_{\ell(\tau')<\ell(\tau)} f_{\tau,\tau'}(x)\,\partial_{\tau'}
\Bigr) h(x),
\]
for some polynomials $f_{\tau,\tau'}(x)\in R$. Applying this to
$h(x)=\partial_{w^{-1}w_0}\Delta(x;y)$ and evaluating at $y=x$, we obtain
\begin{equation}\label{eq:435}
    \SSS_w(w'x;x)
    =
    \Bigl(
    (-1)^{\ell(w')}\Delta_{w'}(x)\,\partial_{w'}
    +\sum_{\ell(\tau')<\ell(w')} f_{w',\tau'}(x)\,\partial_{\tau'}
    \Bigr)
    \partial_{w^{-1}w_0}\Delta(x;y=x).
\end{equation}

By the identity $\SSS_u(x,x)=\delta_{u,e}$ (see \cite[(6.4)]{Macdonald1991}), when $\ell(\tau')\leq \ell(w)$,
the expression
\[
\partial_{\tau'}\partial_{w^{-1}w_0}\Delta(x;y=x)
\]
is nonzero only when $\tau'=w$. Consequently, the right-hand side of
\eqref{eq:435} is nonzero only if $w'=w$, and in this case it equals
$(-1)^{\ell(w)}\Delta_w(x)$. This proves \eqref{eq:wactionondoubleschub}.
\hfill$\Box$

\medskip
Now let $R_w$ denote the $R$-bimodule with the usual left $R$-action and
right $R$-action twisted by $w$. More explicitly, the bimodule structure is
given by
\[
(R\otimes R)\times R_w \to R_w,
\qquad
(f\otimes g,\,h)\longmapsto g\bigl(x_{w(1)},\dots,x_{w(n)}\bigr)\,f\,h.
\]

There is a natural $R$-bimodule homomorphism
\[
F_w\colon R\otimes_{R^{S_n}} R \longrightarrow R_w,
\qquad
f\otimes g \longmapsto g\bigl(x_{w(1)},\dots,x_{w(n)}\bigr)\,f.
\]

There is an $R$-bimodule filtration on $R\otimes_{R^{S_n}}R$; see also
\cite{Soergel:bimodule}.

\begin{Lem}\label{lem:Gammafiltration}
The $R$-bimodule $R\otimes_{R^{S_n}}R$ admits a filtration
\[
0=\Gamma_{\binom{n}{2}+1}\subset \Gamma_{\binom{n}{2}}
\subset \cdots \subset \Gamma_1\subset \Gamma_0=R\otimes_{R^{S_n}}R
\]
by $R$-bimodules such that, for every $j$, there is an $R$-bimodule
isomorphism
\begin{equation}\label{eq:45}
\bigoplus_{\ell(w)=j} F_w\colon
\Gamma_j/\Gamma_{j+1}\xrightarrow{\;\sim\;}
\bigoplus_{\ell(w)=j} \Delta_w\cdot R_w .
\end{equation}
\end{Lem}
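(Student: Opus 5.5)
We will define the filtration through the double Schubert polynomials. Let $M\coloneq R\otimes_{R^{S_n}}R$, and write $\SSS_w(x\otimes 1;1\otimes x)\in M$ for the element obtained by substituting $x_i\mapsto x_i\otimes 1$ and $y_i\mapsto 1\otimes x_i$ in \eqref{eq:doubleschub}. For $0\le j\le \binom n2$ we set
\[
\Gamma_j\coloneq \sum_{\ell(w)\ge j} (R\otimes R)\cdot \SSS_w(x\otimes 1;1\otimes x)\subset M .
\]
These are sub-bimodules by construction, and they decrease in $j$. We put $\Gamma_{\binom n2+1}=0$.

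To show $\Gamma_0=M$ and to control ranks, we use that $\{\SSS_u(x)\}_{u\in S_n}$ is a free $R^{S_n}$-basis of $R$. Hence $M$ is a free left $R$-module with basis $\{1\otimes \SSS_v(x)\}_{v\in S_n}$, of rank $n!$. By the triangularity in \eqref{eq:doubleschub}, $\SSS_{w}(x;y)$ equals $\pm\SSS_{w^{-1}}(-y)$ plus terms $\SSS_u(x)\SSS_v(-y)$ with $\ell(v)<\ell(w)$. Up to a sign change of variables this expresses the elements $\SSS_w(x\otimes1;1\otimes x)$ in terms of the left $R$-basis by a unitriangular matrix (ordered by length). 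Consequently the $\SSS_w(x\otimes1;1\otimes x)$ also form a left $R$-basis of $M$. In particular $\Gamma_0=M$, and each $\Gamma_j$ contains the free left submodule $\bigoplus_{\ell(w)\ge j}R\cdot\SSS_w$.

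Next we study the maps $F_{w'}$. By \eqref{eq:wactionondoubleschub}, $F_{w'}(\SSS_w(x\otimes1;1\otimes x))=\SSS_w(x;w'x)$. After relabelling (the formula is stated as $\SSS_w(w'x;x)$, so we apply $w'^{-1}$ and adjust the indexing of $w'$ versus $w'^{-1}$ in $F_{w'}$), this vanishes when $\ell(w')\le \ell(w)$ and $w'\ne w$, and equals $\pm\Delta_w$ (up to the appropriate twist) when $w'=w$. Since $F_{w'}$ is a bimodule map, the following hold.

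\begin{enumerate}[(i)]
\item $F_{w'}(\Gamma_{j})=0$ whenever $\ell(w')<j$.
\item For $\ell(w)=j$, $F_w(\Gamma_j)=\Delta_w R_w$, because $F_w$ kills the generators of length $>j$ and those of length $j$ other than $w$, and $\Delta_w R_w$ is generated by $\pm\Delta_w$ as a bimodule.
\end{enumerate}

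Thus $\bigoplus_{\ell(w)=j}F_w$ induces a surjection $\Gamma_j/\Gamma_{j+1}\to\bigoplus_{\ell(w)=j}\Delta_wR_w$.

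The main obstacle is injectivity, namely showing that $\Gamma_j$ is not larger than expected, i.e. that
\[
\Gamma_j=\bigoplus_{\ell(w)\ge j}R\cdot\SSS_w
\]
as a left module, so that right multiplication does not enlarge it. We plan to use the combined map $F=\bigoplus_{w\in S_n}F_w\colon M\to\bigoplus_w R_w$. It is injective: both sides are torsion-free of rank $n!$ over $R$, and after inverting the discriminant $F$ becomes the standard isomorphism $R\otimes_{R^{S_n}}R\otimes \mathrm{Frac}\cong\prod_w \mathrm{Frac}$ from Galois theory. Now take $m=\sum_w a_w\SSS_w\in\Gamma_j$ with $a_w\in R$. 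Suppose $a_{w}\ne0$ for some $w$ with $\ell(w)<j$, and choose such a $w$ of minimal length. Applying $F_{w}$ kills all other terms with $\ell\ge\ell(w)$, and leaves $\pm a_w\Delta_w$ up to twist, which is nonzero. This contradicts (i). Hence $\Gamma_j$ is exactly the left span of the $\SSS_w$ with $\ell(w)\ge j$, and in particular it is closed under right multiplication.

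The same evaluation argument then gives injectivity on the graded piece. If $m=\sum_{\ell(w)\ge j}a_w\SSS_w$ maps to zero under all $F_w$ with $\ell(w)=j$, then $a_w\Delta_w=0$ for those $w$, so $a_w=0$ and $m\in\Gamma_{j+1}$. The graded piece is a free left $R$-module on $\{\SSS_w\}_{\ell(w)=j}$, mapping isomorphically onto $\bigoplus_{\ell(w)=j}\Delta_wR_w$, which yields \eqref{eq:45}.
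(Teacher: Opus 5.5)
Your argument is essentially the paper's. Both build a left $R$-basis of $R\otimes_{R^{S_n}}R$ from double Schubert polynomials using the triangularity in \eqref{eq:doubleschub}, then use the vanishing/diagonal evaluation \eqref{eq:wactionondoubleschub} and a minimal-length evaluation argument. The only difference is packaging. You take $\Gamma_j$ to be the sub-bimodule generated by the elements of length $\ge j$ and prove it equals their left span. The paper takes the left span and proves it equals $\bigcap_{\ell(w)<j}\ker F_w$. Your contradiction argument is exactly the proof of that kernel description. The injectivity of $\bigoplus_w F_w$ that you announce is never used and can be dropped.

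You should not leave the diagonal computation as ``after relabelling \dots\ up to the appropriate twist'', because this is the one step that is not cosmetic. With your placement $x\mapsto x\otimes 1$, $y\mapsto 1\otimes x$, one has $F_{w'}(\SSS_w(x\otimes 1;1\otimes x))=\SSS_w(x;w'x)$. This is obtained from $\SSS_w(w'^{-1}x;x)$ by permuting the variables. So the surviving diagonal pairing is $\SSS_w\leftrightarrow F_{w^{-1}}$, not $F_w$, and your change of basis is block-triangular with signed permutation blocks rather than unitriangular. The paper avoids this by using $S_w=\SSS_w(1\otimes x;x\otimes 1)$, for which $F_{w'}(S_w)=\SSS_w(w'x;x)$ directly.

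Even in that normalization, the diagonal value, with $wx=(x_{w(1)},\dots,x_{w(n)})$, is $\SSS_w(wx;x)=\Delta_w(wx)=(-1)^{\ell(w)}\Delta_{w^{-1}}(x)$, not $(-1)^{\ell(w)}\Delta_w(x)$. For example, take $n=3$ and $w=s_1s_2$, so $wx=(x_2,x_3,x_1)$ and $w^{-1}w_0=s_1$. Then $\SSS_w(x;y)=\partial_1\Delta(x;y)=(x_1-y_1)(x_2-y_1)$, hence $\SSS_w(wx;x)=(x_1-x_2)(x_1-x_3)$. This is not $\pm\Delta_w(x)=\pm(x_1-x_3)(x_2-x_3)$, and it generates a different ideal.

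So your claim (ii) holds only after replacing $\Delta_wR_w$ by $\Delta_{w^{-1}}R_w$, equivalently $\Delta_w$ acting on $R_w$ from the right. The display \eqref{eq:wactionondoubleschub} in the paper has the same $w\leftrightarrow w^{-1}$ slip. With that correction, and the paper's placement of variables, your proof goes through verbatim. The corrected graded pieces are still isomorphic to $R_w$ as bimodules. The only property used later, that the twisting line bundle in Corollary~\ref{cor:pndiagfiltration} is effective, is unaffected.
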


\begin{proof}
We use elements of $R\otimes_{R^{S_n}}R$ expressed in terms of
double Schubert polynomials as in~\eqref{eq:doubleschub} to span the filtration. For each
$w\in S_n$, define
\[
S_w \coloneq
1\otimes \SSS_w(x)
+\!\!\sum_{\substack{ \ell(u)<\ell(w); \;w=v^{-1}u;  \\ \ell(w)=\ell(v)+\ell(u)}}
\SSS_v(-x)\otimes \SSS_u(x).
\]

By \eqref{eq:doubleschub} and \eqref{eq:wactionondoubleschub}, for any
$w,w'\in S_n$ with $\ell(w')\ge \ell(w)$ we have
\begin{equation}\label{eq:47}
F_w(S_{w'})
=\SSS_{w'}(wx;x)
=(-1)^{\ell(w)}\delta_{w,w'}\,\Delta_w(x).
\end{equation}

Since $\{1\otimes \SSS_w(x)\}_{w\in S_n}$ is a free basis of
$R\otimes_{R^{S_n}}R$ as a left $R$-module, it follows that
$\{S_w\}_{w\in S_n}$ is also a free left $R$-module basis.

For each $j\in\{0,\dots,\binom{n}{2}+1\}$, define
\[
\Gamma_j \coloneq \bigoplus_{\ell(w)\ge j} R\cdot S_w,
\qquad
\Gamma_{<j} \coloneq \bigoplus_{\ell(w)< j} R\cdot S_w.
\]
Both $\Gamma_j$ and $\Gamma_{<j}$ are free left $R$-modules. Viewing
$\bigoplus_{\ell(w)<j} F_w$
as a left $R$-module homomorphism, equation~\eqref{eq:47} implies
\begin{align}
\notag\bigl(\bigoplus_{\ell(w)<j}F_w\bigr)(\Gamma_j)&=0,\\
\bigl(\bigoplus_{\ell(w)=j}F_w\bigr)(\Gamma_j/\Gamma_{j+1})
&\cong \bigoplus_{\ell(w)=j}\Delta_w\cdot R_w \label{eq:45}
\end{align}
as left $R$-modules.

By induction on $j$ and \eqref{eq:45}, the map
$\bigoplus_{\ell(w)<j}F_w$ is injective on $\Gamma_{<j}$. Consequently,
the subset $\Gamma_j$ is the kernel of the $R$-bimodule homomorphism
\[
\bigoplus_{\ell(w)<j}F_w\colon
R\otimes_{R^{S_n}}R \longrightarrow \bigoplus_{\ell(w)<j} R_w .
\]
In particular, $\Gamma_j$ is an $R$-bimodule. Hence the quotient
$\Gamma_j/\Gamma_{j+1}$ inherits an $R$-bimodule structure, and the map
\eqref{eq:45} is an $R$-bimodule isomorphism.
\end{proof}

Let $Y=(\bP^1)^n$. The symmetric group $S_n$ acts on $Y$ by permuting the
factors, and we have the quotient map
$f\colon Y \longrightarrow Y/S_n \cong \bP^n$.

For each $w\in S_n$, define
\begin{align*}
\cL_w
&\coloneq \cO_{\bP^1}(a_1)\boxtimes\cdots\boxtimes \cO_{\bP^1}(a_n),
\qquad
a_i=\text{the degree of }x_i\text{ in }\Delta_w(x), \\
\Gr_w
&\coloneq \{(w x,x)\mid x\in Y\}\subset Y\times Y .
\end{align*}

Let $\C^n\subset Y$ be the standard affine chart. The filtration of
$\cO_{\C^n\times_{\C^n/S_n}\C^n}$ as an
$\cO_{\C^n\times\C^n}$-module constructed in
Lemma~\ref{lem:Gammafiltration} extends naturally to
$\cO_{Y\times_{Y/S_n}Y}$, with appropriate degree shifts arising from
homogenization. 
\begin{comment}
\begin{Rem}
The statement that we are actually using in the proof of Proposition \ref{prop:syminduce} is a filtration for $f^*f_*E$ where $E\in \Db((\bP^1)^n)$.

More explicitly, the variety $Y$ is the $\mathbf{Proj}$ of the $\Z^n$-graded ring $\RRR=\C[p_1,q_1]\otimes \dots\otimes \C[p_n,q_n]$, $S_n$ permutes the indices of variables. The module $\RRR\otimes_{\RRR^{S_n}}\RRR$ becomes a $\Z^n$-graded $\RRR$-bimodule. For any $\Z^n$-graded  $\RRR$-module $M$, by definition, we have $f^*f_*\widetilde{M}=\widetilde{\RRR\otimes_{\RRR^{S_n}}\RRR\otimes_\RRR M}$, where $\widetilde \bullet$ stands for the sheafification. Lemma \ref{lem:Gammafiltration} shows that as a $\Z^n$-graded $\RRR$-bimodule $\RRR\otimes_{\RRR^{S_n}}\RRR$ admits a filtration factors of which are isomorphic to $\Delta_w(\frac{p_i}{q_i})\RRR_w$.
\end{Rem}
\end{comment}
This yields the following sheaf-theoretic analogue of Lemma \ref{lem:Gammafiltration}.

\begin{Cor}\label{cor:pndiagfiltration}
The sheaf $\cO_{Y\times_{Y/S_n}Y}\in \Coh(Y\times Y)$ admits a filtration
\[
0=\cE_{\binom{n}{2}+1}\subset \cE_{\binom{n}{2}}
\subset \cdots \subset \cE_1\subset \cE_0=\cO_{Y\times_{Y/S_n}Y}
\]
whose successive quotients are given by
\[
\cE_j/\cE_{j+1}
\cong
\bigoplus_{\substack{w\in S_n\; \ell(w)=j}}
\cO_{\Gr_w}\otimes p_2^*\cL_w^{-1}.
\]
\end{Cor}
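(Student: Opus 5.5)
The plan is to globalize Lemma~\ref{lem:Gammafiltration} by passing to the multigraded (Cox ring) description of $Y$. Write $Y=\mathbf{Proj}$ of the $\Z^n$-graded ring $\RRR=\C[p_1,q_1]\otimes\cdots\otimes\C[p_n,q_n]$, on which $S_n$ acts by permuting the indices. Then $Y\times Y$ is the multi-Proj of $\RRR\otimes\RRR$ with its $\Z^{2n}$-grading. The quotient $Y/S_n\cong\bP^n$ corresponds to the invariants in the diagonal multidegrees $(d,\dots,d)$, and $Y\times_{Y/S_n}Y$ is the sheafification of the graded bimodule $\RRR\otimes_{\RRR^{S_n}}\RRR$, restricted to the appropriate multidegrees. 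Dehomogenizing by $x_i=p_i/q_i$ on the chart $\{q_i\neq0\}^n$ recovers $R\otimes_{R^{S_n}}R$ and the objects of Lemma~\ref{lem:Gammafiltration}.

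\textbf{Step 1: homogenize the bimodule filtration.} Replace each factor $x_i-x_j$ by the bihomogeneous form $p_iq_j-p_jq_i$. This gives a homogeneous $\Delta_w(p,q)$ of multidegree $(a_1,\dots,a_n)$, where $a_i$ is the degree of $x_i$ in $\Delta_w(x)$. Homogenize the Schubert and double Schubert polynomials in the same way; they are defined by divided differences, and these have homogeneous analogues $\partial_j=(\id-s_j)/(p_jq_{j+1}-p_{j+1}q_j)$. From these we form homogeneous elements $S_w$ and set
\[
\Gamma_j=\ker\Bigl(\bigoplus_{\ell(w)<j}F_w\Bigr),
\]
where $F_w(f\otimes g)=g(w\cdot(p,q))\,f$. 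The key point is to define $\Gamma_j$ intrinsically as this kernel, so that it is automatically a graded sub-bimodule. Its restriction to each standard chart $\prod U_i^{\pm}$ (with $p_i\neq0$ or $q_i\neq0$) is then the affine kernel. On the chart $q\neq 0$, Lemma~\ref{lem:Gammafiltration} gives the graded pieces.

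\textbf{Step 2: compute the graded pieces sheaf-theoretically.} The map $\bigoplus_{\ell(w)=j}F_w$ induces an injection of sheaves
\[
\cE_j/\cE_{j+1}\hookrightarrow\bigoplus_{\ell(w)=j}\cO_{\Gr_w}.
\]
Here $\cO_{\Gr_w}$ is the sheaf associated to $\RRR_w$, and $\cE_j$ denotes the sheafified $\Gamma_j$. On each chart the image is the ideal generated by $\Delta_w$. Globally, $\Delta_w(p,q)$, pulled back via $p_2$ (the second factor carries the twisted variables), is a section of $p_2^*\cL_w$ restricted to $\Gr_w$. So the image is the image of the multiplication map $\cO_{\Gr_w}\otimes p_2^*\cL_w^{-1}\xrightarrow{\Delta_w}\cO_{\Gr_w}$. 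This map is injective, because $\Gr_w\cong Y$ is integral and $\Delta_w\not\equiv0$ on it. That yields the stated isomorphism.

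\textbf{Step 3: independence of charts.} We must check that the filtration does not depend on the chart. The charts are permuted by the automorphisms $p_i\leftrightarrow q_i$ of each $\bP^1$ factor. Applied simultaneously to all factors, this commutes with the $S_n$ action and changes $\Delta_w$ only by a sign. It is less clear whether the kernel description is preserved under mixed charts, where $p_i\neq 0$ for some $i$ and $q_j\neq 0$ for others. I would argue that on any chart the filtration is determined by vanishing of $F_w$ for shorter $w$. This condition is chart-independent, since $F_w$ is a globally defined map $\cO_{Y\times_{Y/S_n}Y}\to\cO_{\Gr_w}$. It remains to show exactness of the affine statement on mixed charts. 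For this I would use a change of coordinates $x_i\mapsto$ a Möbius transformation applied diagonally to all factors, e.g. $x\mapsto 1/(x-c)$ for generic $c$. This is $S_n$-equivariant, preserves the form of $\Delta_w$ up to units, and every point of $Y$ lies in a chart $\C^n$ after such a transformation.

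I expect this last point to be the main obstacle: verifying that the graded pieces are exactly $\Delta_w\cdot\cO_{\Gr_w}$ everywhere, including near the boundary divisors. Using the diagonal $PGL_2$-action to cover $Y$ by translates of the standard affine chart reduces it entirely to Lemma~\ref{lem:Gammafiltration}. The only remaining check is that $\Delta_w$ transforms by an invertible factor on the chart and that its twisting line bundle is $\cL_w$, which is a direct degree count.
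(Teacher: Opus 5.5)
Your argument is correct, and its decisive step differs from the paper's. The paper only asserts that the filtration of Lemma~\ref{lem:Gammafiltration} on the standard chart ``extends naturally'' to $Y\times Y$, with degree shifts coming from homogenization. You instead make the filtration intrinsic by setting $\cE_j=\ker\bigl(\cO_Z\to\bigoplus_{\ell(w)<j}\cO_{\Gr_w}\bigr)$, and then check the graded pieces locally using the diagonal $PGL_2$-action, which commutes with $S_n$.

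This gives a complete proof from the affine lemma plus equivariance alone. A literal homogenization of the Schubert-basis argument is awkward, as your Step~1 already shows: $s_j$ swaps the $j$-th and $(j+1)$-st multidegrees, so $(\id-s_j)/(p_jq_{j+1}-p_{j+1}q_j)$ is only defined when $d_j=d_{j+1}$. You can simply drop Step~1.

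To make Step~3 airtight, phrase the covering for $Z$ rather than $Y$. If $(x,y)\in Z$, the coordinates of $y$ are a permutation of those of $x$. So for any $c\in\bP^1$ different from all $x_i$, the point lies in $U_c\times U_c$ with $U_c=(\bP^1\setminus\{c\})^n$. Since $U_c$ is an $S_n$-stable affine open, $Z\cap(U_c\times U_c)=U_c\times_{U_c/S_n}U_c$. An $S_n$-equivariant M\"obius isomorphism $\C^n\cong U_c$ carries graphs to graphs, hence carries the kernel filtration to $\Gamma_\bullet$.

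Your ``invertible factor'' claim then becomes exact. Any $g\in GL_2$ multiplies every $p_iq_j-p_jq_i$ by $\det g$, so the divisor $\{\Delta_w(p,q)=0\}$ on $\Gr_w\cong Y$ is $PGL_2$-invariant. Hence the image of $\cE_j/\cE_{j+1}$ in the $w$-component $\cO_{\Gr_w}$ is the ideal sheaf of this divisor everywhere.

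One convention caveat, shared with the paper's statement: $\Delta_w$ is written in the untwisted variables. So the twist is naturally $\cL_w$ pulled back along the projection carrying those variables, and rewriting it via the other projection permutes the degrees $a_i$ by $w$. This is harmless, since Proposition~\ref{prop:syminduce} only uses that the twist is effective.
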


\section{Stability conditions on the product of elliptic curves}
Let
\[
E^n \coloneq E_1\times \cdots \times E_n
\]
be the product of $n$ smooth elliptic curves, with $E_i \cong E$. We recall some results on stability conditions on $E^n$ from
\cite{yucheng:stabprodvar, FLZ:ab3, stabhk}.

For each $1\le i\le n$, fix a point $q_i\in E_i$ and define
\[
H_i \coloneq
E_1\times \cdots \times E_{i-1}\times \{q_i\}\times E_{i+1}\times \cdots \times E_n
\subset E^n .
\]
By abuse of notation, we also denote by $H_i$ the numerical first Chern
class of the line bundle $\cO_{E^n}(H_i)$.
Set
\[
H \coloneq H_1+\cdots+H_n .
\]

We define a lattice $(\Lambda\cong \Z^{2^n}, v)$ for $\Grg(E^n)$ by
\[
v\colon \Grg(E^n)\to \Lambda,
\qquad
[F]\longmapsto
\bigl(H_{j_1}\cdots H_{j_s}\,\ch_{n-s}(F)\bigr)_{1\le j_1<\cdots<j_s\le n}.
\]

\begin{Thm}\label{thm:Eninj}
Let $\sigma$ be a stability condition on $E^n$ with respect to the
lattice $(\Lambda,v)$. Then:
\begin{enumerate}
\item[\rm(1)]
{\cite[Theorem~1.1, Proposition~2.9]{FLZ:ab3}}
all skyscraper sheaves $\{\cO_p\}_{p\in E^n}$ are $\sigma$-stable with the same phase.
\item[\rm(2)]
{\cite[Theorem~1.1]{stabhk}}
the map
\begin{equation}\label{eq35}
\Stab_{(\Lambda,v)}(E^n)\longrightarrow
\Hom_\Z(\Lambda,\C)\times \R,
\qquad
\sigma=(\cP,Z)\longmapsto \bigl(Z,\phi_\sigma(\cO_p)\bigr)
\end{equation}
is injective.
\end{enumerate}
\end{Thm}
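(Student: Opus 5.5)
Both parts of Theorem~\ref{thm:Eninj} are attributed to \cite{FLZ:ab3} and \cite{stabhk}, so the plan is to assemble their arguments in the present setting.

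\emph{Part (1).} I will use the translation-invariance argument of \cite{FLZ:ab3}. First I show that $\sigma$ is invariant under $\Pic^0(E^n)$ and under translations by points of $E^n$. Tensoring with $\cL\in\Pic^0(E^n)$ and pulling back along a translation $t_x$ both act trivially on $\Lambda$: the Chern character changes only by classes that are numerically trivial against the products $H_{j_1}\cdots H_{j_s}$. Hence $\Pic^0(E^n)\times E^n$ acts on $\Stab_{(\Lambda,v)}(E^n)$ over the identity of $\Hom_\Z(\Lambda,\C)$. This group is connected and the action is continuous, so by the local injectivity in Theorem~\ref{thm:Bridgeland} each element fixes $\sigma$ (compare the use of \cite[Theorem 3.5.1]{Polishchuk:families-of-t-structures} in Lemma~\ref{lem:bayerE}).

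Next, take the HN factors of $\cO_p$. Each factor is invariant under $\Pic^0$, so by the Fourier--Mukai/Mukai argument it is supported at $p$. Being supported at a point, its class in $\Lambda$ is a multiple of $v(\cO_p)$, and so its central charge lies on the ray $\R\cdot Z(\cO_p)$. The phases of the factors therefore differ by integers. An $\mathrm{Ext}$-vanishing argument for objects supported at a point, as in \cite[Proposition 2.9]{FLZ:ab3}, then forces a single factor, so $\cO_p$ is semistable. Every Jordan--H\"older factor is also set-theoretically supported at $p$ with class proportional to $v(\cO_p)$, and primitivity gives stability. Finally, translations fix $\sigma$ and move $\cO_p$ to $\cO_{p'}$, so all skyscraper sheaves have the same phase.

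\emph{Part (2).} Suppose $\sigma_1$ and $\sigma_2$ have the same $Z$ and the same phase $\phi$ of $\cO_p$. Following \cite{stabhk}, I will show that their hearts $\cP_i((\phi-1,\phi])$ coincide. Since skyscraper sheaves are stable of phase $\phi$ for both, $E\in\cP_i((\phi-1,\phi])$ forces cohomological constraints: $\Hom(E,\cO_p[k])$ and $\Hom(\cO_p,E[k])$ vanish for $k$ outside a fixed range. This bounds the hearts between two fixed shifts of $\Coh$, namely $\Coh[n-1]\ldots\Coh$ up to normalization. I then argue by induction on dimension of support, using restriction to the divisors $H_i$ and the $\Pic^0$-invariance again, to conclude that a $\sigma_1$-stable object of phase in $(\phi-1,\phi]$ has no $\sigma_2$-HN factors outside this range. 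The deformation theorem identifies the slicings on a neighbourhood, and a connectedness argument along the fibre $\{Z\}\times\{\phi\}$ then gives $\sigma_1=\sigma_2$.

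I expect this last comparison of hearts to be the main obstacle, and it is genuinely the content of \cite{stabhk}. In the paper I would simply cite it.
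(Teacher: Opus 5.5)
The paper gives no proof of this theorem: part (1) is imported from \cite{FLZ:ab3} and part (2) from \cite{stabhk}, so ending with ``cite it'' is exactly the paper's route. Your sketches, however, should not be offered as the argument. At the two load-bearing points they are either silent or would not work.

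\textbf{Part (1).} Knowing that the HN factors $A_1,\dots,A_m$ of $\cO_p$ are supported at $p$ only gives $v(A_i)=k_i\,v(\cO_p)$ with $k_i\in\Z\setminus\{0\}$ and $\sum_i k_i=1$. A pattern such as $k_1=-1$ in phase $\phi_0+1$ and $k_2=2$ in phase $\phi_0$ is numerically allowed. So semistability of $\cO_p$ is the real content, and ``an Ext-vanishing argument'' does not say what rules such patterns out.

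One way to organize what is missing: the slicing restricts to objects supported at $p$, because their HN factors are again $\Pic^0$-invariant. This gives a bounded t-structure on that subcategory with a finite-length heart. Since the subcategory's Grothendieck group is $\Z$, the heart has a unique simple object $S$. What is still needed is $S\cong\cO_p[k]$.

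Also, ``a connected group acting continuously'' is not free. Continuity of $g\mapsto g\cdot\sigma$ in the metric topology is not evident, and is the reason one appeals to \cite[Theorem 3.5.1]{Polishchuk:families-of-t-structures}.

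Your step from semistable to stable is correct: all Jordan--H\"older factors have the same phase, so each $k_i>0$, and $\sum k_i=1$. It does require first showing that the Jordan--H\"older factors are $\Pic^0$-invariant, e.g.\ because the connected group $\Pic^0(E^n)$ permutes the finitely many of them.

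\textbf{Part (2).} The closing ``connectedness argument along the fibre'' cannot work. By Theorem \ref{thm:Bridgeland}, the fibre of \eqref{eq35} over $(Z,\phi)$ is discrete, so connecting $\sigma_1$ to $\sigma_2$ inside it presupposes the conclusion.

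It is also unnecessary. Once $\cP_1((\phi-1,\phi])=\cP_2((\phi-1,\phi])$, equal hearts and equal central charges already give $\sigma_1=\sigma_2$. Alternatively, $\mathrm{dist}(\sigma_1,\sigma_2)<1$ would suffice by Bridgeland's uniqueness lemma in \cite{Bridgeland:Stab}.

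The real difficulty is elsewhere. The cohomological bound from stability of skyscraper sheaves only confines both hearts to a window of $n$ consecutive shifts of $\Coh(E^n)$, so phases may a priori differ by up to about $n$. Closing that gap is the whole theorem. ``Induction on the dimension of support via restriction to $H_i$'' names no mechanism for it, so part (2), as you acknowledge, rests entirely on \cite{stabhk}.
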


For every $(a,b)\in \Q_{>0}\times \Q$, define a group homomorphism
\[
Z^{a,b}\colon \Lambda\longrightarrow \C
\]
by
\begin{align}
\label{eq:zab}
Z^{a,b}(v(F))
&= -\int_{E^n} e^{-(b+ia)H}\,\ch(F) \\
&= -\ch_n(F)
+(b+ia)H\ch_{n-1}(F)
-\cdots
+(-1)^{n+1}\frac{(b+ia)^n}{n!}H^n\rk(F).\notag
\end{align}

\begin{Thm}[{\cite[Theorem 5.9]{yucheng:stabprodvar}, \cite[Theorem 4.5]{stabhk}}]\label{thm:Yucheng}
For any $(a,b)\in \Q_{>0}\times \Q$, there exists a stability condition
\[
\sigma^{a,b} = (\cP^{a,b}, Z^{a,b}) \in \Stab_{(\Lambda,v)}(E^n)
\]
with central charge $Z^{a,b}$ as in \eqref{eq:zab} and $\phi_{\sigma^{a,b}}(\cO_p)=1$.
\end{Thm}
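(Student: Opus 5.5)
This result is quoted from \cite[Theorem 5.9]{yucheng:stabprodvar} and \cite[Theorem 4.5]{stabhk}, so within this paper no new argument is needed. I will only indicate how I would reconstruct the proof.

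\textbf{Step 1: the first Bridgeland heart.} I would construct $\sigma^{a,b}$ inductively on $n$ via the product construction of \cite{yucheng:stabprodvar}. For $n=1$, take $\Coh(E)$ rotated by the $b$-twist. Then $Z^{a,b}(F)=-\deg F+(b+ia)\rk F$ is a stability function on $\Coh(E)$, and the support property holds with respect to $(\rk,\deg)$.

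\textbf{Step 2: the inductive step.} Write $E^n=E^{n-1}\times E_n$. Take a stability condition $\sigma^{a,b}_{n-1}$ on $E^{n-1}$ and the curve stability on $E_n$. Apply Liu's construction: tilt the relative heart $\mathcal{A}_{n-1}\boxtimes \Coh(E_n)$, i.e.\ families of $\sigma_{n-1}$-objects over $E_n$ in the sense of \cite{Polishchuk:families-of-t-structures}. This heart is Noetherian because $\sigma_{n-1}$ is invariant under $\Pic^0$ and translations. Compatibility of the central charges comes from the factorisation
\[
e^{-(b+ia)H}=e^{-(b+ia)H'}\cdot e^{-(b+ia)H_n}.
\]
This identifies $Z^{a,b}$ with the ``product'' central charge integrated over the fibres.

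\textbf{Step 3: HN property and phase of points.} The HN property would follow from Noetherianity of the tilted heart together with discreteness of $\Im Z$. Discreteness holds because $a,b\in\Q$, which is exactly why rationality is assumed. Skyscraper sheaves lie in the heart with $Z=-1$, hence have phase $1$.

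\textbf{Step 4: support property (main obstacle).} Here one must show that $\sigma^{a,b}$ satisfies the support property with respect to the full lattice $\Lambda$ of mixed intersection numbers $H_{j_1}\cdots H_{j_s}\ch_{n-s}$, not just the $H$-numerical lattice. I would follow \cite{stabhk}: find a quadratic form $Q$ on $\Lambda_\R$ that is negative definite on $\ker Z^{a,b}$ and satisfies $Q(v(F))\ge 0$ for every semistable $F$. This would be proved inductively using the fibrewise Bogomolov-type inequalities on each factor $E_i$, and then combined via the multi-polarisation $H_1,\dots,H_n$.
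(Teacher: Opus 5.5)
Your proposal agrees with the paper. The paper gives no argument of its own for this statement; it imports it from \cite[Theorem 5.9]{yucheng:stabprodvar} and \cite[Theorem 4.5]{stabhk}, so the citation is the entire proof.

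One caution about your optional sketch: Noetherianity of the tilted heart does not follow from $\Pic^0$- and translation-invariance. That invariance, \cite[Theorem 3.5.1]{Polishchuk:families-of-t-structures}, is a separate fact, which the paper uses in Lemma~\ref{lem:bayerE}. Noetherianity has to be extracted from the tilt itself, plausibly from Noetherianity of Polishchuk's constant-family heart combined with the discreteness afforded by $a,b\in\Q$. Also, your Step~4 names the quadratic-form criterion but does not produce the form, and producing it is exactly the nontrivial content of \cite{stabhk}.
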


Such a stability condition $\sigma^{a,b}$ is unique by
Theorem~\ref{thm:Eninj}.

\noindent For a positive integer $m$, denote the self-isogeny
\begin{align}
\pi_m \colon E^n \longrightarrow E^n, \qquad z \longmapsto m z.
\end{align}

\begin{Lem}\label{lem:pbofstabEn}
Let $\sigma$ be a stability condition on $E^n$. Then both
$\pb{\pi_m}\sigma$ and $\pf{(\pi_m)}\sigma$ are stability conditions on
$E^n$.
\end{Lem}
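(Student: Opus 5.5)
The plan is to deduce both statements from Proposition~\ref{prop:pfofstab} and its pullback analogue. The key input is that $\pi_m$ is an étale Galois cover with group $K\coloneqq E^n[m]$, the $m$-torsion points. Hence $\pi_m^*\pi_{m*}$ and $\pi_{m*}\pi_m^*$ decompose into translations and twists by line bundles in $\Pic^0(E^n)$. Both of these act trivially on any stability condition on $E^n$.

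\emph{Step 1 (invariance).} First I will check that for every $k\in K$ and every $\cL\in\Pic^0(E^n)$ we have $t_k^*\sigma=\sigma$ and $\sigma\otimes\cL=\sigma$, i.e.\ $t_k^*$ and $-\otimes\cL$ preserve every slice $\cP_\sigma(\theta)$.
\begin{itemize}
\item For $\Pic^0$ this is \cite[Theorem~3.5.1]{Polishchuk:families-of-t-structures} (see \cite[Remark 2.24]{stabhk}), already used in Lemma~\ref{lem:bayerE}.
\item For translations, I will argue the same way. The connected group $E^n$ acts continuously on $\Stab(E^n)$ by $x\mapsto t_x^*\sigma$, and this action fixes the numerical central charge, since $t_x^*$ acts trivially on numerical classes. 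The orbit of $\sigma$ is therefore a connected subset of a fibre of $\Forg_Z$. By Theorem~\ref{thm:Bridgeland} such a fibre is discrete, so the orbit is a point.
\end{itemize}
Alternatively, for a torsion point one can write $t_k$ through the Fourier--Mukai dual picture, but the continuity argument is cleaner.

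\emph{Step 2 (pushforward).} Since $\pi_m$ is an étale Galois cover, base change gives
\[
\pi_m^*\pi_{m*}F\cong\bigoplus_{k\in K}t_k^*F .
\]
By Step 1, if $F\in\cP_\sigma(\theta)$ then every summand lies in $\cP_\sigma(\theta)$, hence so does the direct sum. This verifies \eqref{eq23}, and Proposition~\ref{prop:pfofstab} gives that $\pf{(\pi_m)}\sigma$ is a stability condition.

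\emph{Step 3 (pullback).} I will run the same Polishchuk argument with the roles reversed. Take $\cD_1=\Db(E^n)$ as the source, $\Phi=\pi_{m*}$, and $\Psi=\pi_m^*$. Since $\pi_m$ is étale, $\omega_{\pi_m}$ is trivial, so $\pi_m^*$ is both a left and a right adjoint of $\pi_{m*}$. The condition to verify is $\pi_{m*}\pi_m^*\cP_\sigma(\theta)\subset\cP_\sigma(\le\theta)$. By the projection formula,
\[
\pi_{m*}\pi_m^*G\cong G\otimes\pi_{m*}\cO\cong\bigoplus_{\chi\in\widehat K}G\otimes\cL_\chi ,
\]
where the $\cL_\chi\in\Pic^0(E^n)$ are torsion line bundles. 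By Step 1 each summand stays in $\cP_\sigma(\theta)$. The adjunction computation of Proposition~\ref{prop:pfofstab} then goes through verbatim, and together with \cite[Proposition 2.2.1]{Polishchuk:families-of-t-structures} it shows that $\pb{\pi_m}\sigma$ is a pre-stability condition.

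\emph{Step 4 (support property).} This is the main point needing care for the pullback. Suppose $E$ is $\pb{\pi_m}\sigma$-semistable. Then $\pi_{m*}E$ is $\sigma$-semistable, so
\[
|Z(\pi_{m*}E)|\ge C\,\|v(\pi_{m*}E)\|.
\]
I then need $\|v(\pi_{m*}E)\|\gtrsim\|v'(E)\|$ for a suitable lattice $v'$ on $\Grg(E^n)$. I will take $v'\coloneqq v\circ\pi_{m*}$, with $\Lambda'$ equal to its image, so the required inequality holds tautologically. It remains only to check that this lattice has finite rank, which is clear since it is a subgroup of $\Lambda$.

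If one wants the support property with respect to the same lattice $\Lambda$, one instead uses that $\pi_{m*}$ is injective on $\Lambda_\Q$. This holds because $\pi_m^*$ acts on $H^{2k}$ by multiplication by $m^{2k}$, hence bijectively, and by the projection formula $\pi_{m*}$ is adjoint to $\pi_m^*$. The pushforward case is analogous, using $v\circ\pi_m^*$.

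I expect the main obstacle to be Step 1 for translations, namely justifying continuity of the action so that the discreteness argument applies. I would cite the same Polishchuk result, which treats actions of connected algebraic groups.
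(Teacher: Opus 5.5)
Your proposal is correct and follows essentially the same route as the paper. Both arguments decompose $\pi_m^*(\pi_m)_*F\cong\bigoplus_{g\in E^n[m]}t_g^*F$ and $(\pi_m)_*\pi_m^*F\cong\bigoplus_{\cL^m\cong\cO}F\otimes\cL$, and use Polishchuk's result that $E^n\times\Pic^0(E^n)$ acts trivially on stability conditions to see that both functors preserve each $\cP_\sigma(\theta)$. They then conclude via Proposition~\ref{prop:pfofstab} and Polishchuk's pullback criterion \cite[Corollary 2.2.2]{Polishchuk:families-of-t-structures}. Your continuity/discreteness sketch for translation invariance and your explicit support-property check via $v\circ(\pi_m)_*$ only spell out details that the paper leaves to these citations.
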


\begin{proof}
By \cite[Theorem 3.5.1]{Polishchuk:families-of-t-structures} (see also
\cite[Remark 2.24]{stabhk}), the group $E^n\times \Pic^0(E^n)$ acts
trivially on $\Stab_{(\Lambda,v)}(E^n)$. In particular, for any
$F\in \cP_\sigma(\theta)$, $\cL \in \Pic^0(E^n)$, and $g\in
E^n$, we have
\[
g^*(F\otimes \cL) \in \cP_\sigma(\theta).
\]

Hence, for any $F\in \cP_\sigma(\theta)$, we have
\[
(\pi_m)_*\pi_m^* F = \bigoplus_{\cL^m \cong \cO} F\otimes \cL \in \cP_\sigma(\theta).
\]
By \cite[Corollary 2.2.2]{Polishchuk:families-of-t-structures}, it follows
that $\pb{\pi_m}\sigma$ is a stability condition on $E^n$.

Similarly,
\[
\pi_m^* (\pi_m)_* F = \bigoplus_{\substack{g\in \text{Torsion}_m(E^n)}} g^* F \in \cP_\sigma(\theta).
\]
By Proposition~\ref{prop:pfofstab}, we conclude that $\pf{(\pi_m)}\sigma$ is
also a stability condition on $E^n$.
\end{proof}

Note that for any $F\in\Db(E^n)$, we have
\begin{align*}
H^{n-j}\,\ch_j(\pi_m^*F) &= m^{2j}\,H^{n-j}\,\ch_j(F),\qquad
H^{n-j}\,\ch_j((\pi_m)_*F) = m^{2n-2j}\,H^{n-j}\,\ch_j(F).
\end{align*}

It follows that the pullback and pushforward central charges satisfy
\begin{align*}
\pf {(\pi_m)} Z^{a,b} &= Z^{a,b}\circ \pi_m^* = m^{2n}\,Z^{a/m^2,b/m^2},\qquad
\pb \pi_m Z^{a,b} = Z^{a,b}\circ (\pi_m)_* = Z^{m^2a,m^2b}.
\end{align*}

By Theorem~\ref{thm:Eninj}, we therefore have
\begin{align*}
\pf {(\pi_m)} \sigma^{a,b} &= m^{2n}\,\sigma^{a/m^2,b/m^2},\qquad
\pb \pi_m \sigma^{a,b} = \sigma^{m^2a,m^2b},
\end{align*}
where $m^{2n}$ is only rescaling the central charge, and does not affect the slicing.

In particular, the corresponding slicings satisfy
\begin{align}\label{eq36}
\pf {(\pi_m)} \cP^{a,b} = \cP^{a/m^2,b/m^2},\qquad
\pb \pi_m \cP^{a,b} = \cP^{m^2a,m^2b}.
\end{align}
It follows that
\begin{align}\label{eq37}
\pf {(\pi_m)} \pb \pi_m \cP^{a,b} 
= \pb \pi_m \pf {(\pi_m)} \cP^{a,b} 
= \cP^{a,b}.
\end{align}

For a line bundle $\cL$ with $\ch_1(\cL)$ numerical equivalent to $H$, note that
\begin{align*}
Z^{a,b}\bigl(v(F\otimes \cL^{-1})\bigr)
= -\ch_n^{(b+ia)H}(F\otimes \cL^{-1}) 
= -\ch_n^{(b+1+ia)H}(F) 
= Z^{a,b+1}(v(F)).
\end{align*}
By Theorem~\ref{thm:Eninj}, this implies
\begin{align}\label{eq38}
\sigma^{a,b}\otimes \cL = \sigma^{a,b+1}.
\end{align}

\begin{Prop}\label{prop:resnforEn}
For any $N\in \Z_{\ge 1}$, there exists a stability condition
$\sigma^{a,b}$ on $E^n$ as in Theorem~\ref{thm:Yucheng} such that
\[
\sigma^{a,b}\otimes \cL^N \lsm \sigma^{a,b}[1].
\]
\end{Prop}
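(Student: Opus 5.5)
Here $\cL$ is a line bundle numerically equivalent to $H$. By \eqref{eq38}, $\sigma^{a,b}\otimes\cL^N=\sigma^{a,b+N}$, so the statement asks for $\sigma^{a,b+N}\lsm\sigma^{a,b}[1]$ for suitable $(a,b)$. The plan is to get this by rescaling with the isogenies $\pi_m$: using \eqref{eq36}, $\sigma^{a,b}$ with $a$ large is the pullback $\pb{\pi_m}$ of $\sigma^{a/m^2,b/m^2}$, and the shift $b\mapsto b+N$ becomes the tiny shift $b/m^2\mapsto (b+N)/m^2$. So I will reduce to a statement about stability conditions that are close to each other, and then transport it back with Lemma \ref{lem:pbpfcompare}.

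\textbf{Step 1: a fixed pair.} Fix $a_0>0$ and $b_0\in\Q$. The family $\sigma^{a_0,b}$, $b\in\Q$, should be the restriction of a continuous path in $\Stab_{(\Lambda,v)}(E^n)$. The reason is as follows. By Theorem \ref{thm:Bridgeland}, $\sigma^{a_0,b_0}$ deforms uniquely along the path of central charges $Z^{a_0,b}$ for $b$ near $b_0$. The skyscraper phase stays $1$ by continuity: $Z^{a,b}(\cO_p)=-1$ is constant, so the phase is locally constant in the integers, hence equal to $1$. By the injectivity in Theorem \ref{thm:Eninj}(2), the deformed conditions therefore agree with $\sigma^{a_0,b}$ for rational $b$. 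Hence $\mathrm{dist}(\sigma^{a_0,b},\sigma^{a_0,b_0})\to 0$ as $b\to b_0$. Choose $\epsilon>0$ with $\mathrm{dist}(\sigma^{a_0,b_0+\epsilon},\sigma^{a_0,b_0})<1$. For a $\sigma^{a_0,b_0+\epsilon}$-stable $F$ of phase $\phi$ this gives $\phi^+_{\sigma^{a_0,b_0}}(F)<\phi+1=\phi_{\sigma^{a_0,b_0}[1]}(F[1])$. Rewriting in terms of $\sigma^{a_0,b_0}[1]$ (whose phase function is shifted by one), Lemma \ref{lem:lsm1} gives
\[
\sigma^{a_0,b_0+\epsilon}\lsm\sigma^{a_0,b_0}[1].
\]
Strictly, the distance bound gives phases $\le\phi+1-\delta$ for some uniform $\delta>0$, so the inequality is strict. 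I choose $\epsilon$ rational of the form $N/m^2$ by taking $m$ large, and I take $b_0=b/m^2$ with $b$ any rational.

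\textbf{Step 2: pull back.} By Lemma \ref{lem:pbofstabEn}, $\pb{\pi_m}$ of these conditions are stability conditions. Lemma \ref{lem:pbpfcompare} is stated for $\lsmeq$, but its proof works verbatim for $\lsm$, and $\pb{\pi_m}$ commutes with the shift $[1]$. Combining this with \eqref{eq36}, $\pb{\pi_m}\cP^{a_0,b_0}=\cP^{m^2a_0,m^2b_0}$, we obtain
\[
\sigma^{m^2a_0,\,b+N}\lsm\sigma^{m^2a_0,\,b}[1].
\]
Setting $a=m^2a_0$ and applying \eqref{eq38}, this is exactly $\sigma^{a,b}\otimes\cL^N\lsm\sigma^{a,b}[1]$.

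\textbf{Main obstacle.} The delicate point is Step 1, namely identifying the Bridgeland deformation path with the rational family $\sigma^{a_0,b}$. This needs the local homeomorphism of Theorem \ref{thm:Bridgeland} together with the injectivity in Theorem \ref{thm:Eninj}(2), and it needs the skyscraper phase to remain exactly $1$ along the path. I would verify that phase claim via Theorem \ref{thm:Eninj}(1), since skyscrapers stay stable and $Z(\cO_p)=-1$ is fixed. A secondary point is obtaining strict $\lsm$ from a strict bound $\mathrm{dist}<1$.
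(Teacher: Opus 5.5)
Your proof is correct and follows the paper's argument: a small deformation in $b$, together with the injectivity of $(Z,\phi(\cO_p))$, gives $\sigma^{a_0,b_0+\epsilon}\lsm\sigma^{a_0,b_0}[1]$, and pulling back along $\pi_m$ multiplies the shift by $m^2$ (the paper's passage through $\pf{(\pi_m)}$ and \eqref{eq37} amounts to this same pullback). Choosing $\epsilon=N/m^2$ from the start is a mild streamlining, since it removes the paper's second pullback by $\pi_k$ and its use of Lemma~\ref{lem:bayerE} to descend from $\cL^{k^2}$ to $\cL^N$; note only that your $b$ is forced to equal $m^2b_0$ (because $m$ depends on $b_0$) rather than being an arbitrary rational, which is harmless since the statement is existential.
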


\begin{proof}
Let $\sigma^{a,b}$ be a stability condition. By Theorems~\ref{thm:Bridgeland}
and~\ref{thm:Yucheng}, there exists $\delta_0>0$ such that for any
$|\delta|\le \delta_0$, there exists a stability condition $\omega$ with
central charge $Z^{a,b+\delta}$ as in \eqref{eq:zab}. Shrinking $\delta_0$
if necessary, we may assume that
\[
\mathrm{dist}(\sigma^{a,b},\omega)<1,
\]
so that in particular,
\begin{align}\label{eq:omega-lsm}
\omega \lsm \sigma^{a,b}[1].
\end{align}

Choose $m$ sufficiently large such that $\frac{1}{m^2}\le \delta_0$. By
Theorems~\ref{thm:Eninj} and~\ref{thm:Yucheng}, the stability condition
$\omega$ with central charge $Z^{a,b+1/m^2}$ coincides with
$\sigma^{a,b+1/m^2}$. 

By \eqref{eq36}, \eqref{eq38}, and \eqref{eq:omega-lsm}, we then have
\[
\pf{(\pi_m)}\Bigl((\pb \pi_m \cP^{a,b}) \otimes \cL \Bigr)
= \cP^{a,b+1/m^2} \lsm \cP^{a,b}[1].
\]
Applying $\pb \pi_m$ to both sides, and using Lemma~\ref{lem:pbpfcompare}
together with \eqref{eq37}, we obtain
  \[
(\pb \pi_m \cP^{a,b}) \otimes \cL \lsm \pb \pi_m \cP^{a,b}[1].
\]
Equivalently, we have
\begin{align}\label{eq:sabL} 
\sigma^{m^2 a, m^2 b} \otimes \cL \lsm \sigma^{m^2 a, m^2 b}[1].
\end{align}

For any $k\in \Z_{\ge 1}$, applying $\pb{\pi_k}$ to \eqref{eq:sabL}, then by \eqref{eq36}, \eqref{eq38}, Theorem \ref{thm:Eninj} and \ref{thm:Yucheng}, we have
\[
\sigma^{(km)^2 a, (km)^2 b} \otimes \cL^{k^2} \lsm
\sigma^{(km)^2 a, (km)^2 b}[1].
\]

Finally, by Lemmas~\ref{lem:bayerE} and~\ref{lem:lsm1}, we have
$\sigma\otimes \cL^s \lsmeq \sigma\otimes \cL^t$ whenever $s\le t$.  
The statement follows.
\end{proof}

\section{Stability conditions on smooth projective varieties}
\subsection{From $E^n$ to $(\bP^1)^n$}
Let $\tau_E\colon E\to E$, $z\mapsto -z$, be the reflection of the elliptic curve $E$, and let
$\pi_E\colon E\to E/\langle \tau_E\rangle \cong \bP^1$ be the corresponding double
cover. For any smooth projective variety $M$, these extend naturally to
\[
\tau = (\tau_E,\id_M) \colon E\times M \to E\times M, \qquad
f = (\pi_E,\id_M) \colon E\times M \to \bP^1\times M.
\]

\begin{Lem}\label{lem:doublecoverind}
Assume that $\sigma$ is a $\tau$-invariant stability condition on
$E\times M$. Then $\pf f\sigma$ is a stability condition on
$\bP^1\times M$.
\end{Lem}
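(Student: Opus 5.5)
We will verify the hypothesis \eqref{eq23} of Proposition~\ref{prop:pfofstab} for the double cover $f=(\pi_E,\id_M)$. Then Proposition~\ref{prop:pfofstab} gives directly that $\pf f\sigma$ is a stability condition on $\bP^1\times M$. So the task is to show
\[
f^*f_*\cP_\sigma(\theta)\subset \cP_\sigma(\le\theta)\qquad\text{for all }\theta.
\]

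\emph{Step 1: the double cover exact sequence.} The fibre product $(E\times M)\times_{\bP^1\times M}(E\times M)$ is $(E\times_{\bP^1}E)\times M\times M$ restricted over the diagonal of $M$. Hence it reduces to $E\times_{\bP^1}E\times M$. This curve-fibre-product is the union of the diagonal $\Delta_E$ and the antidiagonal $\Gr_{\tau_E}$. These two components meet along the $2$-torsion points, i.e.\ the ramification locus $R\subset E$.

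Its structure sheaf fits into a short exact sequence on $E\times E$, the rank-one analogue of Corollary~\ref{cor:pndiagfiltration} (the case $n=1$ of Lemma~\ref{lem:Gammafiltration}, with $\Delta_{s}=x_1-x_2$):
\[
0\to \cO_{\Gr_{\tau_E}}\otimes p_2^*\cO_E(-R')\to \cO_{E\times_{\bP^1}E}\to \cO_{\Delta_E}\to 0 .
\]
Here $R'$ is the pullback of a point of $\bP^1$, a degree-$2$ divisor; equivalently, the kernel is the ideal of $\Delta_E$ in the fibre product. Using flat base change, $f^*f_*$ is the Fourier--Mukai transform with this kernel, times $\cO_{\Delta_M}$. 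We therefore obtain, for every $F\in\Db(E\times M)$, a distinguished triangle
\[
\tau^*F\otimes \cO(-D)\to f^*f_*F\to F\to \tau^*F\otimes\cO(-D)[1],
\]
where $D=\pi_E^{-1}(\mathrm{pt})\times M$. Only the linear-equivalence class of $D$ matters up to $\Pic^0$. The precise line bundle, $\cO(-D)$ versus $\cO(-2\{q\}\times M)$, should be double-checked; note that $D\sim\{q_1\}\times M+\{q_2\}\times M$.

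\emph{Step 2: phase bounds.} Let $F\in\cP_\sigma(\theta)$. By $\tau$-invariance, $\tau^*F\in\cP_\sigma(\theta)$. Write $\cO(D)=\cO(H_{q_1})\otimes\cO(H_{q_2})$ with $H_q=\{q\}\times M$. Lemma~\ref{lem:bayerE} gives $\sigma\lsmeq\sigma\otimes\cO(H_q)$, and applying it twice (using transitivity and Lemma~\ref{lem:lsm1}) yields $\sigma\lsmeq\sigma\otimes\cO(D)$. By Lemma~\ref{lem:lsm1}(2) we get
\[
\phi^+_\sigma(\tau^*F\otimes\cO(-D))=\phi^+_{\sigma\otimes\cO(D)}(\tau^*F)\le\theta .
\]
Hence both outer terms of the triangle lie in $\cP_\sigma(\le\theta)$. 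Since $\cP_\sigma(\le\theta)$ is extension-closed, $f^*f_*F\in\cP_\sigma(\le\theta)$. Strictly, Lemma~\ref{lem:lsm1}(2) is stated for stable objects, but semistable $F$ is filtered by stable factors of phase $\theta$, so the bound extends.

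\emph{Main obstacle.} The delicate point is Step 1. We must identify the kernel correctly: the structure sheaf of the non-reduced-free but reducible fibre product, and the correct twisting line bundle on the antidiagonal component. We must also justify the base change, for which $f$ is flat since it is finite between smooth varieties. The sign of the twist matters, since a twist by $\cO(+D)$ would break the argument. The computation $\pi_{E*}\cO_E=\cO\oplus\cO(-1)$, pulling back to $\cO\oplus\cO_E(-R')$, confirms the negative twist.
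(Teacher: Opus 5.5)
Your proposal follows the paper's proof: the filtration of $\cO_{Y\times_X Y}$ gives the triangle $\tau^*G\otimes\cO_Y(-D)\to f^*f_*G\to G$, and $\tau$-invariance together with $\sigma\lsmeq\sigma\otimes\cO_Y(D)$ (from Lemma~\ref{lem:bayerE}) yields $f^*f_*\cP_\sigma(\theta)\subset\cP_\sigma(\le\theta)$, so Proposition~\ref{prop:pfofstab} applies.

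The one slip is the twisting divisor in Step 1. It should be the ramification divisor $D=E^\tau\times M$, i.e.\ four points on $E$, not a degree-$2$ fibre of $\pi_E$. The reason is that $\Delta_E$ and $\Gr_{\tau_E}$ meet transversally at those four points, so the ideal of $\Delta_E$ in the fibre product is $\cO_{\Gr_{\tau_E}}(-\Delta_E|_{\Gr_{\tau_E}})$. Consistently, $\pi_{E*}\cO_E\cong\cO_{\bP^1}\oplus\cO_{\bP^1}(-2)$, since $\chi(\cO_E)=0$; your $\cO_{\bP^1}(-1)$ is wrong. This does not affect the argument: Step 2 only uses that $D$ is an effective sum of divisors $\{q\}\times M$, so applying Lemma~\ref{lem:bayerE} four times instead of twice fixes it.
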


\begin{proof}
We apply Proposition~\ref{prop:pfofstab} with $Y = E\times M$ and
$X = \bP^1\times M$. Let $Z = Y \times_X Y$ be the fiber product, which is a
subscheme of $Y \times Y$, and denote by $p_i$ the natural projections:
\begin{equation}\label{eq:basechange}
\begin{tikzcd}
Z \subset Y \times Y \ar{r}{p_2}\ar{d}{p_1} & Y \ar{d}{f} \\
Y \ar{r}{f} & X
\end{tikzcd}
\end{equation}

Since $f$ is finite and surjective, we have
\[
f^* f_* (-) = p_{2*}\bigl(p_1^*(-) \otimes \cO_Z\bigr).
\]

Let $E^\tau \subset E$ denote the four points fixed by $\tau_E$, and set
$D = E^\tau \times M$. Define
\[
\Gr_\tau \coloneq \{(\tau y, y)\mid y\in E\times M\} \subset Y\times Y.
\]
Then the kernel $\cO_Z$ admits a filtration
\[
0 \longrightarrow \cO_{\Gr_\tau} \otimes p_2^* \cO_Y(-D) 
\longrightarrow \cO_Z 
\longrightarrow \cO_{\Delta(Y)} \longrightarrow 0
\]
in $\Coh(Y\times Y)$. Therefore, for any $G\in \cP_\sigma(\theta)$, the object
$p_{2*}\bigl(p_1^* G \otimes \cO_Z\bigr) = f^* f_* G$ fits into a distinguished
triangle
\begin{align}\label{eq34}
\tau^*G \otimes \cO_Y(-D) \longrightarrow f^* f_* G \longrightarrow
G \xrightarrow{+}.
\end{align}

By Lemma~\ref{lem:bayerE} and \ref{lem:lsm1}, we have $\sigma \lsmeq \sigma\otimes \cO_Y(D)$.  
Since $\sigma$ is $\tau$-invariant, it follows that
\[
\phi_\sigma^+(\tau^*G\otimes \cO_Y(-D))
= \phi_{\sigma \otimes \cO_Y(D)}^+(\tau^*G)
\le \phi_\sigma(\tau^*G)=\phi_\sigma(G),
\]
where the only inequality uses Lemma~\ref{lem:lsm1}.

Combining this fact with \eqref{eq34}, we obtain
\[
\phi_\sigma^+(f^* f_* G)
\le \max \bigl\{\phi_\sigma^+(\tau^*G \otimes \cO_Y(-D)), \phi_\sigma(G)\bigr\}
= \phi_\sigma(G) = \theta.
\]

Hence the condition
\[
f^* f_* \cP_\sigma(\theta) \subset \cP_\sigma(\le \theta)
\]
is satisfied. By Proposition~\ref{prop:pfofstab}, it follows that
$\pf f \sigma$ is a stability condition on $\bP^1 \times M$.
\end{proof}

\subsection{From $(\bP^1)^n$ to $\bP^n$}

We then consider the morphism
\[
f \colon (\bP^1)^n \longrightarrow (\bP^1)^n/S_n = \mathrm{Sym}^n(\bP^1) \cong \bP^n.
\]

\begin{Prop}\label{prop:syminduce}
Let $\sigma$ be a stability condition on $(\bP^1)^n$ satisfying:
\begin{itemize}
    \item $\sigma$ is $S_n$-invariant;
    \item $\sigma \lsmeq \sigma \otimes \cL$ for every effective line bundle $\cL$.
\end{itemize}
Then $\pf f \sigma$ is a stability condition on $\bP^n$.
\end{Prop}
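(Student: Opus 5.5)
We will verify condition \eqref{eq23} of Proposition~\ref{prop:pfofstab} for $Y=(\bP^1)^n$, $X=\bP^n$ and the quotient map $f$. Once we know that $f^*f_*\cP_\sigma(\theta)\subset\cP_\sigma(\le\theta)$ for every $\theta\in\R$, the proposition gives that $\pf f\sigma$ is a stability condition on $\bP^n$. This is the same strategy as in Lemma~\ref{lem:doublecoverind}; the only change is that the two-step filtration of $\cO_Z$ is replaced by the filtration of Corollary~\ref{cor:pndiagfiltration}.

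First, $f$ is finite, surjective and flat, since $\bP^n$ and $(\bP^1)^n$ are smooth. Base change along the square \eqref{eq:basechange} therefore gives, with $Z=Y\times_{\bP^n}Y$,
\[
f^*f_*(G)\cong p_{2*}\bigl(p_1^*G\otimes \cO_Z\bigr),
\]
so $f^*f_*$ is the Fourier--Mukai transform with kernel $\cO_Z$. Corollary~\ref{cor:pndiagfiltration} filters $\cO_Z$ by subsheaves $\cE_j$ with graded pieces $\bigoplus_{\ell(w)=j}\cO_{\Gr_w}\otimes p_2^*\cL_w^{-1}$. Fourier--Mukai transforms are exact in the kernel, so each short exact sequence $0\to\cE_{j+1}\to\cE_j\to\cE_j/\cE_{j+1}\to 0$ becomes a distinguished triangle. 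Hence $f^*f_*G$ is an iterated extension of the objects
\[
\Phi_{\cO_{\Gr_w}\otimes p_2^*\cL_w^{-1}}(G)\cong w^*G\otimes\cL_w^{-1}, \qquad w\in S_n,
\]
where $w^*$ denotes pullback along the automorphism $x\mapsto wx$ (or its inverse, depending on conventions; this does not matter below). This identification uses that $\Gr_w$ is the graph of an automorphism, so $p_1$ and $p_2$ restrict to isomorphisms on it. In particular $p_1^*G\otimes\cO_{\Gr_w}$ has no higher Tor, since $\cO_{\Gr_w}$ is flat over the first factor.

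Now fix $G\in\cP_\sigma(\theta)$. Because $\sigma$ is $S_n$-invariant, $w^*G\in\cP_\sigma(\theta)$. Each $\cL_w=\cO(a_1)\boxtimes\cdots\boxtimes\cO(a_n)$ has all $a_i\ge 0$, so it is effective. By hypothesis $\sigma\lsmeq\sigma\otimes\cL_w$, and Lemma~\ref{lem:lsm1} then gives
\[
\phi^+_\sigma(w^*G\otimes\cL_w^{-1})=\phi^+_{\sigma\otimes\cL_w}(w^*G)\le \phi_\sigma(w^*G)=\theta,
\]
exactly as in the proof of Lemma~\ref{lem:doublecoverind}. The subcategory $\cP_\sigma(\le\theta)$ is closed under extensions, so walking up the filtration shows $\phi^+_\sigma(f^*f_*G)\le\theta$. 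This is \eqref{eq23}, and Proposition~\ref{prop:pfofstab} finishes the proof.

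The main obstacle is the passage from the algebraic Lemma~\ref{lem:Gammafiltration} to the global sheaf statement. One must check that the affine filtration of $R\otimes_{R^{S_n}}R$ glues over the $2^n$ standard charts of $(\bP^1)^n$, or is defined directly on the multigraded ring $\C[p_1,q_1]\otimes\cdots\otimes\C[p_n,q_n]$. One must also check that the homogenization twists come out as $p_2^*\cL_w^{-1}$ with nonnegative degrees $a_i$, in particular with the sign that puts the twist on $G$ as a negative line bundle. This is what Corollary~\ref{cor:pndiagfiltration} asserts, so we rely on it. The remaining care goes into identifying the transform of $\cO_{\Gr_w}\otimes p_2^*\cL_w^{-1}$ with $w^*G\otimes\cL_w^{-1}$ rather than $w^*(G\otimes\cL_w^{-1})$. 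If it turned out to be the latter, the argument still goes through, because $S_n$-invariance lets us move $w^*$ past the twist at the cost of replacing $\cL_w$ by another effective bundle.
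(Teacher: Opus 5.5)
Your proposal is correct and follows essentially the same route as the paper. You verify \eqref{eq23} by writing $f^*f_*G$ as the Fourier--Mukai transform with kernel $\cO_Z$, use Corollary~\ref{cor:pndiagfiltration} to obtain factors $w^*G\otimes\cL_w^{-1}$, bound their $\phi^+_\sigma$ by $\theta$ via $S_n$-invariance, effectiveness of $\cL_w$ and Lemma~\ref{lem:lsm1}, and then apply Proposition~\ref{prop:pfofstab}; your extra remarks are correct details the paper leaves implicit (flatness of $f$ for base change, Tor-independence along $\Gr_w$, and robustness to $w^*G\otimes\cL_w^{-1}$ versus $w^*(G\otimes\cL_w^{-1})$).
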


\begin{proof}
We apply Proposition~\ref{prop:pfofstab} with $Y = (\bP^1)^n$ and $X = \bP^n$. Let
$Z = Y \times_X Y$ be the fiber product, and denote by $p_i$ the natural projections as in \eqref{eq:basechange}. Then
$f^* f_* (-) = p_{2*} \bigl(p_1^*(-) \otimes \cO_Z \bigr).$

By Corollary~\ref{cor:pndiagfiltration}, the structure sheaf $\cO_Z$ admits a filtration with factors of the form
\[
\cO_{\Gr_w} \otimes p_2^* \cL_w^{-1}, \quad w\in S_n,
\]
where each $\cL_w$ is an effective line bundle.

Hence, for any object $E \in \Db(Y)$, the object
$p_{2*}(p_1^* E \otimes \cO_Z)$ admits a filtration with factors
\begin{align}\label{eq:filtofE}
\{\, w^* E \otimes \cL_w^{-1} \,\}_{w \in S_n}.
\end{align}

For $E\in \cP_\sigma(\theta)$, the $S_n$-invariance of $\sigma$ implies
\[
w^* E \in \cP_\sigma(\theta), \quad \forall w \in S_n.
\]
Since $\sigma \lsmeq \sigma \otimes \cL$ for any effective line bundle $\cL$, Lemma~\ref{lem:lsm1} gives
\[
\phi_\sigma^+(w^* E \otimes \cL_w^{-1}) 
= \phi_{\sigma \otimes \cL_w}^+(w^* E) \le \phi_\sigma(w^* E) = \theta.
\]

It then follows from the filtration \eqref{eq:filtofE} that
\[
f^* f_* E \in \cP_\sigma(\le \theta).
\]

By Proposition~\ref{prop:pfofstab}, the pair of data $\pf f \sigma$ is therefore a stability condition on $\bP^n$.
\end{proof}

The Weyl group of type $B_n$, $\wbn$, acts naturally on $E^n$, with $n$ reflections on each elliptic curve and permutations of the factors. We have the quotient map
\[
\pi \colon E^n \longrightarrow E^n / (\wbn) \cong \bP^n.
\]

\begin{Thm} \label{thm:EntoPn}  
Any stability condition $\sigma$ as in Theorem~\ref{thm:Yucheng} induces a stability condition
$\pf \pi \sigma$ on $\bP^n$.
\end{Thm}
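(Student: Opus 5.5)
We factor $\pi$ as a composition
\[
E^n=Y_0\xrightarrow{f_1}Y_1\xrightarrow{f_2}\cdots\xrightarrow{f_n}Y_n=(\bP^1)^n\xrightarrow{\;f\;}\bP^n,
\qquad Y_k=(\bP^1)^k\times E^{n-k},
\]
where $f_k$ is the double cover $E\to\bP^1$ on the $k$-th factor and the identity elsewhere. Since the pushforward slicing is defined through pullbacks, $\pf{(g\circ h)}=\pf g\,\pf h$: the condition $(g\circ h)^*F\in\cP(\theta)$ is exactly $h^*g^*F\in\cP(\theta)$, and similarly for central charges. It therefore suffices to push $\sigma=\sigma^{a,b}$ step by step. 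At each step we check the hypotheses of Lemma~\ref{lem:doublecoverind}, and at the last step those of Proposition~\ref{prop:syminduce}.

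\textbf{Step 1: invariance of $\sigma$.} First we show that $\sigma$ is invariant under $\wbn$. For $g\in\wbn$, the action $g\cdot\sigma$ is again a stability condition with respect to $(\Lambda,v)$, because $g$ permutes the classes $H_i$ up to numerical equivalence and so preserves the lattice. Its central charge is still $Z^{a,b}$, since $H$ is $g$-invariant numerically. Moreover $g\cdot\sigma$ sends skyscraper sheaves to skyscraper sheaves, so $\phi(\cO_p)=1$ is preserved. The injectivity in Theorem~\ref{thm:Eninj}(2) then gives $g\cdot\sigma=\sigma$.

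\textbf{Step 2: the double covers.} We induct on $k$, writing $\sigma_k=\pf{(f_k\circ\cdots\circ f_1)}\sigma$ on $Y_k$. We must carry along two properties: $\sigma_k$ is invariant under the residual group $(\Z/2)^{n-k}\rtimes S_n$ acting on $Y_k$, and $\sigma_k\lsmeq\sigma_k\otimes\cO(D)$ for effective divisors $D$ pulled back from point classes on each factor.

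For invariance, $\pf{}$ is natural with respect to automorphisms commuting with $f_k$. Concretely, $\pf{f_k}(\tilde g\cdot\sigma_{k-1})=g\cdot\pf{f_k}\sigma_{k-1}$ when $f_k\tilde g=gf_k$. The $S_n$-part permutes factors of mixed type, so the correct bookkeeping is to keep the group $(\Z/2)^{n-k}\rtimes S_{n-k}$ on the elliptic factors together with $S_k$ on the $\bP^1$ factors. Full $S_n$-invariance is then recovered only at the end, on $(\bP^1)^n$, via the commuting relation $\pi=f\circ(f_n\circ\cdots\circ f_1)$ and the invariance of $\sigma$ under all of $\wbn$.

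For the hypothesis of Lemma~\ref{lem:doublecoverind} applied to the factor $E\times M$ with $M=Y_{k-1}$ minus one factor, we need $\sigma_{k-1}$ to be $\tau$-invariant, which the invariance above supplies. Its proof also needs $\sigma_{k-1}\lsmeq\sigma_{k-1}\otimes\cO(E^\tau\times M)$. For $k=1$ this follows from Lemma~\ref{lem:bayerE} applied four times, with transitivity of $\lsmeq$.

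\textbf{Step 3: propagating the Bayer property (main obstacle).} The key point is that the Bayer property survives pushforward. By Lemma~\ref{lem:tensorpf}, $\sigma_k\otimes\cL=\pf{}(\sigma_{k-1}\otimes f_k^*\cL)$. If $\cL$ is effective, then $f_k^*\cL$ is effective and numerically a sum of point-divisor classes on the factors. Lemma~\ref{lem:bayerE} applied to $\sigma_{k-1}$ gives $\sigma_{k-1}\lsmeq\sigma_{k-1}\otimes f_k^*\cL$ on the elliptic factors. For the $\bP^1$ factors we use the inductive hypothesis, since Lemma~\ref{lem:bayerE} does not apply in genus $0$. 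Lemma~\ref{lem:pbpfcompare} then yields $\sigma_k\lsmeq\sigma_k\otimes\cL$.

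I expect the delicate point to be exactly this: effective line bundles on $Y_k$ with negative-free multidegrees must pull back to sums of $\cO(H_i)$-type classes up to $\Pic^0$. That holds because $\Pic^0$ acts trivially on these stability conditions (Lemma~\ref{lem:pbofstabEn}'s citation). Once this is in place, at $k=n$ we obtain an $S_n$-invariant $\sigma_n$ on $(\bP^1)^n$ with $\sigma_n\lsmeq\sigma_n\otimes\cL$ for all effective $\cL$, and Proposition~\ref{prop:syminduce} gives that $\pf f\sigma_n=\pf\pi\sigma$ is a stability condition on $\bP^n$.
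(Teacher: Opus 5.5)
Your proposal is correct and follows essentially the paper's proof. It gets $\wbn$-invariance from the injectivity in Theorem~\ref{thm:Eninj}(2), pushes forward through the double covers via Lemma~\ref{lem:doublecoverind} (you make this explicit one factor at a time, tracking $\tau$-invariance by equivariance), obtains the Bayer property on $(\bP^1)^n$ from Lemmas~\ref{lem:bayerE}, \ref{lem:pbpfcompare} and \ref{lem:tensorpf}, and concludes with Proposition~\ref{prop:syminduce}.

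There is one imprecision, in Step~3: an arbitrary effective $\cL$ on $Y_k$ need not pull back to a sum of point-divisor classes (e.g.\ the diagonal on an $E\times E$ factor). This does not matter, because only product-type divisors are needed, and the paper obtains $\pf h\sigma\lsmeq(\pf h\sigma)\otimes\cL_i$ directly on $E^n$ via $h^*\cL_i$, without any induction.
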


\begin{proof}
We factor $\pi$ as
\[
E^n \xrightarrow{h} E^n / (\Z/2\Z)^n \cong (\bP^1)^n \xrightarrow{f} (\bP^1)^n / S_n \cong \bP^n.
\]

Since the central charge and lattice of $\sigma$ are $\wbn$-invariant, Theorem~\ref{thm:Eninj} implies that $\sigma$ is $\wbn$-invariant.  

By Lemma~\ref{lem:doublecoverind}, the pair of data $\pf h\sigma$ defines a stability condition on $(\bP^1)^n$. Moreover, it is $S_n$-invariant because $\sigma$ is.  

For any line bundle $\cL_i = p_i^* \cO_{\bP^1}(1)$ on $(\bP^1)^n$, we have
\[
\pf h \sigma \lsmeq \pf h (\sigma \otimes h^* \cL_i) = (\pf h \sigma) \otimes \cL_i,
\]
where `$\lsmeq$' follows from Lemmas~\ref{lem:bayerE} and \ref{lem:pbpfcompare}, and the `$=$' follows from Lemma~\ref{lem:tensorpf}.  

By Lemma~\ref{lem:lsm1}, it then follows that
\[
\pf h \sigma \lsmeq (\pf h \sigma) \otimes \cL
\]
for any effective line bundle $\cL$ on $(\bP^1)^n$. By Proposition~\ref{prop:syminduce}, the pair of data $\pf f(\pf h \sigma)$ is a stability condition on $\bP^n$.  

Finally, since both $f$ and $h$ are finite surjective maps, we have $\pf f \circ \pf h = \pf \pi$ by definition. The resulting stability condition $\pf \pi \sigma$ is defined with respect to the full numerical Grothendieck group $\Kn(\bP^n)$.
\end{proof}

\begin{Cor}\label{cor:stabPntendstolv}
For any $N \in \Z_{\ge 1}$, there exist stability conditions $\sigma \in \Stab_{\Grg(\bP^n)}(\bP^n)$ satisfying
\begin{align}\label{eq:BayerPn}
\sigma \lsmeq \sigma \otimes \cO(1) \quad \text{and} \quad
\sigma \otimes \cO(N) \lsm \sigma[1].
\end{align}
Moreover, all skyscraper sheaves are in $\cP_\sigma(1)$.
\end{Cor}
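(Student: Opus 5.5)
The plan is to take $\sigma=\pf\pi\sigma^{a,b}$, where $\sigma^{a,b}$ is supplied by Proposition~\ref{prop:resnforEn}, and transport its two properties from $E^n$ to $\bP^n$ by pushforward. Fix $N$. Let $\cL$ be a line bundle on $E^n$ with $\pi^*\cO_{\bP^n}(1)\cong\cL^{k}$ numerically, for some fixed positive integer $k$. Since $\pi$ factors through $h\colon E^n\to(\bP^1)^n$ and then $f\colon(\bP^1)^n\to\bP^n$, we have $f^*\cO_{\bP^n}(1)=\cO(1,\dots,1)$ and $h^*\cO(1,\dots,1)$ numerically $2H$. Hence $k=2$, up to the identification of $\cL$ with a bundle numerically equal to $H$. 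Apply Proposition~\ref{prop:resnforEn} with $kN$ in place of $N$ to obtain $\sigma^{a,b}$ with $\sigma^{a,b}\otimes\cL^{kN}\lsm\sigma^{a,b}[1]$. By Theorem~\ref{thm:EntoPn}, $\sigma\coloneqq\pf\pi\sigma^{a,b}$ is a stability condition on $\bP^n$ with respect to the full numerical Grothendieck group.

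For the Bayer property, Lemma~\ref{lem:tensorpf} gives $\sigma\otimes\cO(1)=\pf\pi(\sigma^{a,b}\otimes\pi^*\cO(1))$. Two facts reduce this to the $E^n$ side. First, $\pi^*\cO(1)$ is effective and is a sum of divisors of the form $H_i$. Second, by \eqref{eq38} and Theorem~\ref{thm:Eninj}, tensoring by a line bundle depends only on its numerical class. Repeated use of Lemma~\ref{lem:bayerE} together with transitivity then gives $\sigma^{a,b}\lsmeq\sigma^{a,b}\otimes\pi^*\cO(1)$. The right-hand side is again of the form $\sigma^{a,b'}$, so its pushforward is a stability condition by Theorem~\ref{thm:EntoPn}. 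Lemma~\ref{lem:pbpfcompare} therefore applies and yields $\sigma\lsmeq\sigma\otimes\cO(1)$.

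For the restriction property, write $\sigma\otimes\cO(N)=\pf\pi(\sigma^{a,b}\otimes\pi^*\cO(N))$. By \eqref{eq38}, this is $\pf\pi\sigma^{a,b+kN}$, which is a stability condition. We also have $\sigma^{a,b}[1]$, whose pushforward is $\sigma[1]$, since shifts commute with $\pi^*$. Lemma~\ref{lem:pbpfcompare} is stated for $\lsmeq$, but the same proof works verbatim for $\lsm$: if $\pi^*F\in\cP(\theta)$, the HN factors of $\pi^*F$ with respect to the second condition are the pullbacks of the HN factors of $F$, and their phases are $<\theta$. Applying this strict version gives $\sigma\otimes\cO(N)\lsm\sigma[1]$.

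Finally, for a skyscraper sheaf $\cO_x$ on $\bP^n$, the pullback $\pi^*\cO_x$ is a sheaf of finite length supported on the fibre, so it is an iterated extension of skyscrapers $\cO_p$. Each $\cO_p$ lies in $\cP^{a,b}(1)$ by Theorem~\ref{thm:Yucheng}, and $\cP(1)$ is extension-closed, so $\pi^*\cO_x\in\cP^{a,b}(1)$ and hence $\cO_x\in\cP_\sigma(1)$.

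The main obstacle is the bookkeeping of line bundles versus numerical classes. Proposition~\ref{prop:resnforEn} is phrased for $\cL$, while $\pi^*\cO(N)$ is only numerically a multiple of $H$. This is handled by Theorem~\ref{thm:Eninj}, since $\sigma^{a,b}\otimes\cM$ depends only on the numerical class of $\cM$. A second point to check is the strict version of Lemma~\ref{lem:pbpfcompare}.
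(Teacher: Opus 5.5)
Your proposal is correct and follows the paper's proof step for step. You take $\sigma=\pf\pi\sigma^{a,b}$ with Proposition~\ref{prop:resnforEn} applied to $2N$, transfer the Bayer and Restriction properties through Lemmas~\ref{lem:bayerE}, \ref{lem:tensorpf} and \ref{lem:pbpfcompare}, and read off the phase of skyscrapers from $\pi^*\cO_x\in\cP_{\sigma^{a,b}}(1)$. The two points you flag are the strict version of Lemma~\ref{lem:pbpfcompare} and the identification of $\pi^*\cO(N)$ with $\cL^{2N}$ up to numerical equivalence via \eqref{eq38} and Theorem~\ref{thm:Eninj}; the paper uses both implicitly, and your justifications are sound.
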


\begin{proof}
By Proposition~\ref{prop:resnforEn}, we may choose $a$ sufficiently large so that
\[
\sigma^{a,b} \otimes \cL^{2N} \lsm \sigma^{a,b}[1].
\]
Applying Theorem~\ref{thm:EntoPn}, the pair of data
\[
\sigma := \pf \pi \sigma^{a,b}
\]
defines a stability condition on $\bP^n$.  

Using Lemmas~\ref{lem:bayerE}, \ref{lem:tensorpf}, and \ref{lem:pbpfcompare}, we have
\begin{align*}
\sigma = \pf \pi \sigma^{a,b} &\lsmeq \pf \pi (\sigma^{a,b} \otimes \cL^2)
= \pf \pi (\sigma^{a,b} \otimes \pi^* \cO(1)) = \sigma \otimes \cO(1), \\
\sigma \otimes \cO(N) &= \pf \pi (\sigma^{a,b} \otimes \cL^{2N}) \lsm \pf \pi (\sigma^{a,b}[1]) =\sigma[1].
\end{align*}

Finally, all skyscraper sheaves on $E^n$ lie in $\cP_{\sigma^{a,b}}(1)$ by Theorem \ref{thm:Yucheng}. For any skyscraper sheaf $\cO_p$ on $\bP^n$, its pullback $\pi^* \cO_p$ lies in $\cP_{\sigma^{a,b}}(1)$, so by definition of $\pf \pi$ we have $\cO_p \in \cP_\sigma(1)$.
\end{proof}

\subsection{From $\bP^n$ to subvarieties}

\begin{Thm}\label{thm:stabonall}
Let $X$ be a smooth projective variety. Then there exists a Bridgeland stability condition on $X$.
\end{Thm}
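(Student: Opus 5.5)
Fix a closed embedding $\iota\colon X\hookrightarrow \bP^n$ with $H=\iota^*\cO(1)$, and set $c=\operatorname{codim}(X)$. Take $\sigma$ on $\bP^n$ from Corollary~\ref{cor:stabPntendstolv}: it satisfies $\sigma\lsmeq\sigma\otimes\cO(1)$, $\sigma\otimes\cO(N)\lsm\sigma[1]$ for an $N$ chosen later in terms of $X$, and all skyscraper sheaves lie in $\cP_\sigma(1)$. The plan is to show that $\pb\iota\sigma$ is a stability condition. Since $\iota$ is a closed embedding, $\iota_*$ is faithful, so the Hom-vanishing and shift axioms for $\pb\iota\cP$ follow formally. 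The support property also transfers formally, via the lattice obtained as the image of $\Grg(X)\to\Grg(\bP^n)$. The real content is the existence of Harder--Narasimhan filtrations.

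By the analogue of \cite[Corollary~2.2.2]{Polishchuk:families-of-t-structures} used in Lemma~\ref{lem:pbofstabEn}, HN filtrations for $\pb\iota\sigma$ exist once the functor $\iota_*\iota^*$ (or its adjoint variant $\iota^!$) sends $\cP_\sigma(\theta)$ into $\cP_\sigma(\ge\theta')$, with a uniform bound relating the phases. I would therefore aim for the inequality
\[
\iota_*\iota^*\cP_\sigma(\theta)\subset \cP_\sigma([\theta,\theta+c]),
\]
or at least a weaker form with the lower bound exact and the upper bound controlled. The idea is to resolve $\cO_X$ on $\bP^n$ by a Koszul-type complex, or more generally by a finite locally free resolution
\[
0\to V_c\to\cdots\to V_0\to\cO_X\to 0 ,
\]
where each $V_k$ is a direct sum of line bundles $\cO(-d)$ with $0\le d\le N_0$. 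Such resolutions have length $n$ in general (Hilbert syzygies), and the bound $N_0$ depends only on $X$. Then $\iota_*\iota^*F=F\otimes\cO_X$ is filtered by the objects $F\otimes V_k[k]$.

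Each factor is controlled by the two properties. The Bayer property gives $\phi^+_\sigma(F\otimes\cO(-d))\le\theta$. The Restriction-$N$ property, iterated with $N\ge N_0$, gives $\phi^-_\sigma(F\otimes\cO(-d))>\theta-1$ and $\phi^+\le\theta$. Hence the factor $F\otimes V_k[k]$ lies in $\cP_\sigma((\theta-1+k,\theta+k])$. This is precisely the estimate that makes the pre-aisles $\pb\iota\cP(>\theta)$ and $\pb\iota\cP(\le\theta)$ produce t-structures compatible with $\iota_*$, and it is the same estimate used in \cite[Section~6]{realred}.

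With this estimate I would construct the t-structures $\pb\iota\cP(>\theta)$ via \cite[Theorem~2.1.2]{Polishchuk:families-of-t-structures} applied with $\Phi=\iota_*$ and $\Psi=\iota^*$. I would then check that the induced hearts $\pb\iota\cP((\theta-1,\theta])$ carry the slicing, which requires the HN property for the pulled-back central charge. Finiteness of HN filtrations should follow from the support property pulled back, together with local finiteness of $\sigma$.

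The main obstacle is the adjunction step. To verify the analogue of \eqref{eq23}, I need that $\Hom(\iota^*E,G)=\Hom(E,\iota_*G)$ vanishes whenever $E\in\cP_\sigma(\theta)$ and $\iota_*G\in\cP_\sigma(<\theta)$. This is not the condition in Proposition~\ref{prop:pfofstab}: one needs $\iota_*\iota^*E$ to have its top phase at most $\theta$ exactly, yet the Koszul factors $F\otimes V_k[k]$ for $k\ge1$ have phases up to $\theta+k$. This is presumably where the original argument had its mistake. I would try to handle it by working with $\iota^!=\iota^*(-)\otimes\omega_X\otimes\omega_{\bP^n}^{-1}[-c]$ instead. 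With this choice the factors appear with phases shifted down, landing in $\cP_\sigma(\le\theta)$ after using Restriction-$N$ with $N$ large. The twist by $\omega_X(n+1)$ is absorbed by enlarging $N$, and the Bayer property is needed only for the nonnegative twists.
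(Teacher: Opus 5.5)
Your third and fourth paragraphs already contain the paper's proof. You filter $\iota_*\iota^*F=F\otimes\iota_*\cO_X$ by the pieces $F\otimes V_k[k]$ of a resolution. The Bayer and Restriction-$N$ properties place every piece with $k\ge 1$ in $\cP_\sigma(>\theta)$, which gives $\iota_*\iota^*\cP_\sigma(\theta)\subset\cP_\sigma(\ge\theta)$. You then apply \cite[Corollary~2.2.2]{Polishchuk:families-of-t-structures} with $\Phi=\iota_*$ and $\Psi=\iota^*$. The paper differs only in small ways: it stops the resolution after $n-1$ steps, handles the coherent tail $\cF[n]$ using that skyscrapers lie in $\cP_\sigma(1)$ together with \cite[Lemma~10.1]{Bridgeland:K3}, and chooses $N$ with $jN\ge a_j$. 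Your version also works: a resolution of length $\le n$ by sums of line bundles exists because $S/I_X$ has depth $\ge 1$ for the saturated ideal, and the cruder bound $N\ge N_0$ suffices. That lower bound is everything the criterion needs, and the paper stops there.

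What is wrong is your last paragraph, which declares this insufficient. The vanishing $\Hom(E,\iota_*G)=0$ for $E\in\cP_\sigma(\theta)$ and $\iota_*G\in\cP_\sigma(<\theta)$ is automatic from the slicing axioms. Nothing requires $\phi^+_\sigma(\iota_*\iota^*E)\le\theta$. The upper-bound condition \eqref{eq23} appears in Proposition~\ref{prop:pfofstab} only because there $\Phi=f^*$, whose left adjoint is not $f_*$. In that setting the needed lower bound for $f^*\Psi$ has to be extracted by adjunction from an upper bound on $f^*f_*$. For $\Phi=\iota_*$ the left adjoint is $\iota^*$ itself, so the lower bound is checked directly.

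In fact the upper bound you ask for is false. For $p\in X$ one has $\iota_*\iota^*\cO_p\cong\bigoplus_{k=0}^{c}\cO_p^{\oplus\binom{c}{k}}[k]$, which contains $\cO_p[c]$ of phase $1+c$. Had that condition really been necessary, the approach could not work for this $\sigma$.

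The $\iota^!$ detour is therefore unnecessary, and as you describe it, it fails. Shifting the factors of a length-$n$ resolution by $[-c]$ still leaves positive shifts for $k>c$. Also, $\omega_X(n+1)$ is a line bundle on $X$, generally not of the form $\cO(m)|_X$, so it cannot be absorbed into $N$. The correct dual statement filters $\iota_*\iota^!E\cong R\mathcal{H}om(\iota_*\cO_X,E)$ by $E\otimes V_k^\vee[-k]$, giving $\iota_*\iota^!\cP_\sigma(\theta)\subset\cP_\sigma(\le\theta)$. By adjunction, this only recovers the lower bound you already had.

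Finally, $\iota_*$ is not faithful in general, because the counit $\iota^*\iota_*E\to E$ need not split. So Hom-vanishing for $\pb\iota\cP$ is not formal; it comes from the t-structures produced by Polishchuk's theorem.
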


\begin{proof}
Let $\iota: X \hookrightarrow \bP^n$ be an embedding. Then $\iota_* \cO_X$ admits a resolution:
\begin{align}\label{eq:resOX}
0 \to \cF \to \cO_{\bP^n}(-a_{n-1})^{\oplus m_{n-1}} \to \dots \to 
\cO_{\bP^n}(-a_1)^{\oplus m_1} \to \cO_{\bP^n} \to \iota_* \cO_X \to 0,
\end{align}
for some integers $a_j, m_j \ge 0$ and $\cF\in\Coh(\bP^n)$.

Choose $N \gg 0$ such that $jN \ge a_j$ for all $1 \le j \le n-1$. By Corollary~\ref{cor:stabPntendstolv}, there exists a stability condition
$\sigma$ on $\bP^n$ satisfying
\[
\sigma \lsmeq \sigma \otimes \cO_{\bP^n}(1), \quad \sigma \otimes \cO_{\bP^n}(N) \lsm \sigma[1].
\]
Then, by Lemma~\ref{lem:lsm1}, for each $1 \le j \le n-1$ we have
\begin{align}\label{eq:twistineq}
\sigma \otimes \cO_{\bP^n}(a_j) \lsmeq \sigma \otimes \cO_{\bP^n}(jN) \lsm 
\sigma \otimes \cO_{\bP^n}(jN - N)[1] \lsm \dots \lsm \sigma[j].
\end{align}

We apply \cite[Corollary 2.2.2]{Polishchuk:families-of-t-structures} to $f=\iota: X \hookrightarrow \bP^n$. To check that $\pb \iota \sigma$ is a stability condition on $X$, we need to verify that for any $E \in \cP_\sigma(\theta)$ with $\theta \in (0,1]$, we have
\[
\iota_* \cO_X \otimes E \in \cP_\sigma(\le \theta).
\]

From the resolution \eqref{eq:resOX}, the object $\iota_* \cO_X$ admits a filtration in $\Db(\bP^n)$ with factors
\[
\cO_{\bP^n}, \quad \cO_{\bP^n}(-a_j)[j], \quad \cF[n].
\]
Tensoring with $E$, we get a filtration of $\iota_* \cO_X \otimes E$ with factors
\[
E, \quad E \otimes \cO_{\bP^n}(-a_j)[j], \quad E \otimes \cF[n].
\]

Hence
\begin{align}\label{eq:phi-min}
\phi^-_\sigma(\iota_* \cO_X \otimes E) \ge 
\min \big\{ \phi_\sigma(E), \phi^-_\sigma(E \otimes \cO_{\bP^n}(-a_j)[j]), \phi^-_\sigma(E \otimes \cF[n]) \big\}.
\end{align}

By \eqref{eq:twistineq} and Lemma~\ref{lem:lsm1}, for $1 \le j \le n-1$ we have
\begin{align}\label{eq:phi-twist}
\phi^-_\sigma(E \otimes \cO_{\bP^n}(-a_j)[j]) \ge \phi_\sigma(E).
\end{align}

Moreover, since all skyscraper sheaves are in $\cP_\sigma(1)$, \cite[Lemma 10.1]{Bridgeland:K3} implies
\[
E \in \cP_\sigma(\theta) \subset \Coh(\bP^n)[0,n-1], \quad \Coh(\bP^n) \subset \cP_\sigma((1-n,1]).
\]
As $\cF\in\Coh(\bP^n)$, it follows that
\[
E \otimes \cF[n] \in \Coh(\bP^n)[n, 3n-1] \subset \cP_\sigma((1,3n]),
\]
so in particular
\begin{align}\label{eq:phi-F}
\phi^-_\sigma(E \otimes \cF[n]) > 1 > \theta.
\end{align}

Combining \eqref{eq:phi-min}, \eqref{eq:phi-twist}, and \eqref{eq:phi-F}, we obtain
\[
\phi^-_\sigma(\iota_* \cO_X \otimes E) \ge \theta.
\]

Hence, the condition in \cite[Corollary 2.2.2]{Polishchuk:families-of-t-structures} is satisfied, and we conclude that
$\pb \iota \sigma$
defines a Bridgeland stability condition on $X$.
\end{proof}

\bibliography{rstab}                      % .bib-Datei

\newcommand{\etalchar}[1]{$^{#1}$}
\begin{thebibliography}{ATJLSS03}

\bibitem[ATJLSS03]{AJS:tstructure}
Leovigildo Alonso~Tarr\'io, Ana Jerem\'ias~L\'opez, and Mar\'ia~Jos\'e Souto~Salorio.
\newblock Construction of {$t$}-structures and equivalences of derived categories.
\newblock {\em Trans. Amer. Math. Soc.}, 355(6):2523--2543, 2003.

\bibitem[Bay19]{Arend:shortproof}
Arend Bayer.
\newblock A short proof of the deformation property of {B}ridgeland stability conditions.
\newblock {\em Math. Ann.}, 375(3-4):1597--1613, 2019.

\bibitem[Bri07]{Bridgeland:Stab}
Tom Bridgeland.
\newblock Stability conditions on triangulated categories.
\newblock {\em Ann. of Math. (2)}, 166(2):317--345, 2007, arXiv:math/0212237.

\bibitem[Bri08]{Bridgeland:K3}
Tom Bridgeland.
\newblock Stability conditions on {$K3$} surfaces.
\newblock {\em Duke Math. J.}, 141(2):241--291, 2008, arXiv:math/0307164.

\bibitem[FLZ22]{FLZ:ab3}
Lie Fu, Chunyi Li, and Xiaolei Zhao.
\newblock Stability manifolds of varieties with finite {A}lbanese morphisms.
\newblock {\em Trans. Amer. Math. Soc.}, 375(8):5669--5690, 2022.

\bibitem[KS08]{Kontsevich-Soibelman:stability}
Maxim Kontsevich and Yan Soibelman.
\newblock Stability structures, motivic {D}onaldson-{T}homas invariants and cluster transformations, 2008, arXiv:0811.2435.

\bibitem[Li25]{realred}
Chunyi Li.
\newblock A real reduction of the manifold of bridgeland stability conditions, 2025, arXiv:2506.21995.

\bibitem[Liu21]{yucheng:stabprodvar}
Yucheng Liu.
\newblock Stability conditions on product varieties.
\newblock {\em Journal für die reine und angewandte Mathematik (Crelles Journal)}, 2021(770):135--157, 2021.

\bibitem[LMP{\etalchar{+}}25]{stabhk}
Chunyi Li, Emanuele Macrì, Alexander Perry, Paolo Stellari, and Xiaolei Zhao.
\newblock Stability conditions on products of curves and hilbert schemes of surfaces, 2025, arXiv:2512.14207.

\bibitem[Mac91]{Macdonald1991}
I.~G. Macdonald.
\newblock {\em Notes on Schubert Polynomials}, volume~6.
\newblock Publications du LaCIM, Laboratoire de combinatoire et d’informatique math\'ematique (LaCIM), Universit\'e du Qu\'ebec \`a Montr\'eal, 1991.

\bibitem[Pol07]{Polishchuk:families-of-t-structures}
A.~Polishchuk.
\newblock Constant families of {$t$}-structures on derived categories of coherent sheaves.
\newblock {\em Mosc. Math. J.}, 7(1):109--134, 167, 2007, arXiv:math/0606013.

\bibitem[Soe07]{Soergel:bimodule}
Wolfgang Soergel.
\newblock Kazhdan-{L}usztig-{P}olynome und unzerlegbare {B}imoduln \"uber {P}olynomringen.
\newblock {\em J. Inst. Math. Jussieu}, 6(3):501--525, 2007.

\end{thebibliography}
\bibliographystyle{halpha}  
\end{document}